\def\ps@pprintTitle{%
  \let\@oddhead\@empty
  \let\@evenhead\@empty
  \let\@oddfoot\@empty
  \let\@evenfoot\@oddfoot
}
\numberwithin{figure}{section}
\numberwithin{equation}{section} 
\newtheorem{definition}{Definition} [section]            
\newtheorem{theorem}{Theorem}[section]            
\newtheorem{lemma}{Lemma} [section]
\newtheorem{proposition}{Proposition}[section]
\newdefinition{corollary}{Corollary}[section]
\newtheorem{remark}{Remark}
 \newtheorem*{proof}{Proof}
 \theoremstyle{empty}
 \newtheorem{refproof}{Proof}  
\def\loc{{\mathrm{loc}}}
\def\dchi{\scalebox{1.2}{$\chi$}}
\def\dint{\displaystyle\int}
\def\trace{{\rm{trace}}~}          
\DeclareMathOperator*{\essinf}{ess\, inf}
\DeclareMathOperator*{\esssup}{ess\, sup}
\newcommand{\mathd}{\mathrm{d}}
\begin{document}

\begin{frontmatter}

\title{{\bfseries Boundness of Some   Multilinear Fractional Integral Operators in Generalized Morrey Spaces on Stratified Lie Groups }}

\author{Jianglong Wu\corref{mycorrespondingauthor}}
\cortext[mycorrespondingauthor]{Corresponding author}
\ead{jl-wu@163.com}


\author{Xiaojiao Tian}

\address{Department of Mathematics, Mudanjiang Normal University, Mudanjiang  157011, China}

\begin{abstract}
In this paper, the main aim  is to consider the  Spanne-type boundedness of  the multiliinear fractional integral operator $\mathcal{I}_{\alpha,m}$ and multiliinear fractional maximal operator $\mathcal{M}_{\alpha,m}$    in the generalized Morrey spaces over some stratified Lie group  $\mathbb{G}$.
\end{abstract}

\begin{keyword}
stratified Lie group \sep multilinear operator \sep  fractional integral operator \sep  generalized Morrey space

\MSC[2020]  42B35\sep 43A80
\end{keyword}

\end{frontmatter}


\section{Introduction and main results}
\label{sec:introduction}

Stratified groups  appear in quantum physics and many parts of mathematics, including  several complex variables, Fourier analysis, geometry, and topology \cite{folland1982hardy,varopoulos2008analysis}.
The geometry structure of stratified groups is so good that it inherits a lot of analysis properties from the Euclidean spaces \cite{stein1993harmonic,grafakos2009modern}.
Apart from this, the difference between the geometry structures of Euclidean spaces and stratified groups makes the study of function spaces on them more complicated.
However, many harmonic analysis problems  on stratified Lie groups deserve a further investigation since  most results   of the theory of Fourier transforms and distributions in Euclidean spaces  cannot yet be duplicated \cite{liu2019multilinear,guliyev2022some}.

Nowadays, more and more attention has been paid to the study of function spaces which arise in the context of groups, such as variable
Lebesgue spaces\cite{liu2019multilinear,liu2022characterisation}, Orlicz spaces \cite{guliyev2022some} and generalized Morrey spaces \cite{guliyev2020commutators} et al.
And Morrey spaces were originally introduced by Morrey in \cite{morrey1938solutions} to study the local behavior of solutions to
second-order elliptic partial differential equations.

The multilinear fractional integral operators were first studied by Grafakos \cite{grafakos1992multilinear}, followed by Kenig and
Stein \cite{kenig1999multilinear} et al. The importance of fractional integral operators is due to the fact that they 
have been widely used in various areas, such as potential analysis, harmonic analysis, and partial differential equations and so on\cite{wang2019multilinear}.

Let $\mathbb{G}$ be a stratified group. According to the definition of the classical multilinear fractional integral operator,   the multilinear
fractional integral operator  $\mathcal{I}_{\alpha,m}$ on stratified groups  can be defined by
\begin{align*}
\mathcal{I}_{\alpha,m}(\vec{f})(x)  & = \dint_{\mathbb{G}^{m}} \dfrac{f_{1}(y_{1}) \cdots f_{m}(y_{m})}{(\rho(y_{1}^{-1}x)+\cdots+\rho(y_{m}^{-1}x))^{mQ-\alpha}}  d \vec{y}, \ \ 0 \le \alpha < mQ,
\end{align*}
and the multilinear fractional maximal operator  $\mathcal{M}_{\alpha,m}$ is defined as follows
\begin{align*}
\mathcal{M}_{\alpha,m}(\vec{f})(x)  & = \sup_{B\ni x  \atop B \subset \mathbb{G}} |B|^{\frac{\alpha}{Q}} \prod_{i=1}^{m} \dfrac{1}{|B|} \dint_{B} |f_{i}(y_{i})|dy_{i}, \ \ 0 \le \alpha < mQ,
\end{align*}
where the supremum is taken over all $\mathbb{G}$-balls $B$ (see the notion in \cref{sec:preliminary}) containing $x$ with radius $r>0$, $|B|$ is the Haar measure of the   $\mathbb{G}$-ball $B$, and   vector function $\vec{f}=(f_{1},f_{2},\dots,f_{m})$ is locally integrable on $\mathbb{G}$.

If we take $m = 1$, 
then $\mathcal{I}_{\alpha,m}$ is   a natural generalization of the classical fractional integral operator $I_{\alpha}   \equiv \mathcal{I}_{\alpha,1}$,  and the classical fractional maximal function $M_{\alpha}\equiv\mathcal{M}_{\alpha,1}  $ coincides for $\alpha= 0$ with the Hardy-Littlewood maximal function $M  \equiv \mathcal{M}_{0,1}$.

In 2013, Guliyev et al. \cite{guliyev2013boundedness} proved the boundedness of  the fractional maximal operator $\mathcal{M}_{\alpha,m} $  with $m=1$ in the generalized Morrey spaces $\mathcal{L}^{p,\varphi}(\mathbb{G})$ (see the definition below) on any Carnot group $\mathbb{G}$.
In 2017, Eroglu et al.  \cite{eroglu2017characterizations} studied the boundedness of the fractional integral operator $\mathcal{I}_{\alpha,m}$ with $m=1$ in the generalized Morrey spaces $\mathcal{L}^{p,\varphi}(\mathbb{G})$  on  Carnot group $\mathbb{G}$.
In 2014, Guliyev and Ismayilova \cite{guliyev2014multi} obtained the boundedness of  $\mathcal{M}_{\alpha,m} $ and $\mathcal{I}_{\alpha,m}$ on product generalized Morrey spaces with $\mathbb{G}=\mathbb{R}^{n}$.
And in recently, 
Liu et al.  \cite{liu2019multilinear}  considered   the multilinear fractional integral $\mathcal{I}_{\alpha,m}$ in variable Lebesgue spaces on stratified groups.

Inspired by the above literature, the purpose of this paper is to   study  the  Spanne-type boundedness of  the  multiliinear fractional integral operator $\mathcal{I}_{\alpha,m}$ and multiliinear fractional maximal operator $\mathcal{M}_{\alpha,m}$  in the generalized Morrey spaces over some stratified Lie group  $\mathbb{G}$.
In all cases, the conditions for the boundedness of $\mathcal{I}_{\alpha,m}$ are given in terms of Zygmund-type integral inequalities on  $(\varphi_{1},\ldots,\varphi_{m},\psi)$
and the conditions for the boundedness of $\mathcal{M}_{\alpha,m}$  are given in terms of supremal type inequalities on $(\varphi_{1},\ldots,\varphi_{m},\psi)$,
which do not assume any assumption on monotonicity of $\varphi_{1},\ldots,\varphi_{m}$ and $\psi$ in $r$.

Our main result can be stated as follows.

The first result gives   the Spanne-type boundedness of multilinear fractional integral operator $\mathcal{I}_{\alpha,m}$ on product generalized Morrey space.

\begin{theorem}  \label{thm: frac-int-op-main-1} 
  Suppose that $m\in \mathbb{Z}^{+}$,  $0<\alpha_{i} <Q $,  $1< p_{i}<Q/\alpha_{i}~(i=1,2,\ldots,m)$ and $\alpha=\sum\limits_{i=1}^{m}\alpha_{i}$.  Let  $q$ satisfy
\begin{align*}
  \frac{1}{q}=\frac{1}{p_{1}}+\cdots+\frac{1}{p_{m}}-\frac{\alpha}{Q}<1,
\end{align*}
and  $(\varphi_{1},\ldots,\varphi_{m},\psi)$ satisfy
\begin{align} \label{condition:Zygmund-type-integral-inequality}
  \prod_{i=1}^{m} \dint_{r}^{\infty} \dfrac{ \essinf\limits_{t<s<\infty}  \varphi_{i}(x,s) s^{Q/p_{i}}   }  {t^{Q/q_{i}}}  \dfrac{dt}{t} \leq C\psi(x,r),
\end{align}
where  $C>0$ does not depend on $r>0$ and $x\in \mathbb{G}$, and $ \frac{1}{q}=\sum\limits_{i=1}^{m}\frac{1}{q_{i}}$ with $ \frac{1}{q_{i}}=\frac{1}{p_{i}}-\frac{\alpha_{i}}{Q}~(i=1,2,\ldots,m)$. Then the operator $\mathcal{I}_{\alpha,m}$ is bounded from product space  $ \mathcal{L}^{p_{1},\varphi_{1}}(\mathbb{G})\times \cdots \times  \mathcal{L}^{p_{m},\varphi_{m}}(\mathbb{G})$ to $ \mathcal{L}^{q,\psi}(\mathbb{G})$.

\end{theorem}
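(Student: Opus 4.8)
The plan is to reduce the theorem to a \emph{local} (Guliyev-type) estimate on a single $\mathbb{G}$-ball and then close the argument using the Zygmund-type condition \eqref{condition:Zygmund-type-integral-inequality}. Recall that the generalized Morrey norm is $\|f\|_{\mathcal{L}^{p,\varphi}}=\sup_{x\in\mathbb{G},\,r>0}\varphi(x,r)^{-1}|B(x,r)|^{-1/p}\|f\|_{L^{p}(B(x,r))}$. I would fix an arbitrary ball $B_{0}=B(x_{0},r)$ and, for each $i$, decompose $f_{i}=f_{i}^{0}+f_{i}^{\infty}$ with $f_{i}^{0}=f_{i}\chi_{2B_{0}}$ and $f_{i}^{\infty}=f_{i}\chi_{\mathbb{G}\setminus 2B_{0}}$. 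Expanding $\mathcal{I}_{\alpha,m}(\vec{f})$ by multilinearity produces $2^{m}$ terms. The purely local term $\mathcal{I}_{\alpha,m}(f_{1}^{0},\dots,f_{m}^{0})$ is controlled by the classical strong $(L^{p_{1}}\times\cdots\times L^{p_{m}})\to L^{q}$ boundedness of $\mathcal{I}_{\alpha,m}$ on $\mathbb{G}$ (the multilinear Hardy--Littlewood--Sobolev inequality), which yields $\|\mathcal{I}_{\alpha,m}(f_{1}^{0},\dots,f_{m}^{0})\|_{L^{q}(B_{0})}\lesssim\prod_{i}\|f_{i}\|_{L^{p_{i}}(2B_{0})}$.

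For each mixed or global term (at least one factor carrying $f_{i}^{\infty}$), the key is the kernel estimate: if $x\in B_{0}$ and $y_{i}\notin 2B_{0}$ then $\rho(y_{i}^{-1}x)\approx\rho(y_{i}^{-1}x_{0})$, so the denominator $(\rho(y_{1}^{-1}x)+\cdots+\rho(y_{m}^{-1}x))^{mQ-\alpha}$ is comparable to its value centred at $x_{0}$ and can be pulled out of the $x$-integration. Decomposing $\mathbb{G}\setminus 2B_{0}$ dyadically into annuli $2^{k+1}B_{0}\setminus 2^{k}B_{0}$ and applying Hölder's inequality annulus-by-annulus with the exponents $p_{i}$ (using $|B(x_{0},t)|\approx t^{Q}$ and $q_{i}^{-1}=p_{i}^{-1}-\alpha_{i}/Q$), I expect the bookkeeping to assemble into the local estimate
\[
\|\mathcal{I}_{\alpha,m}(\vec{f})\|_{L^{q}(B_{0})}\;\lesssim\;|B_{0}|^{1/q}\prod_{i=1}^{m}\int_{2r}^{\infty}\|f_{i}\|_{L^{p_{i}}(B(x_{0},t))}\,t^{-Q/q_{i}}\,\frac{dt}{t},
\]
which I would isolate as a lemma. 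The main obstacle lies precisely here: because the kernel couples all $m$ variables through the single sum $\rho(y_{1}^{-1}x)+\cdots+\rho(y_{m}^{-1}x)$, the operator does not factor, so the dyadic decomposition and Hölder estimates must be carried out simultaneously in all variables, and one must check that the resulting powers of $t$ recombine into the clean product of integrals above, matching the left-hand side of \eqref{condition:Zygmund-type-integral-inequality}.

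To conclude, I would divide the local estimate by $|B_{0}|^{1/q}\psi(x_{0},r)$ and take the supremum over $x_{0}$ and $r$. For each factor I bound $\|f_{i}\|_{L^{p_{i}}(B(x_{0},t))}\le\|f_{i}\|_{\mathcal{L}^{p_{i},\varphi_{i}}}\,\varphi_{i}(x_{0},t)\,|B(x_{0},t)|^{1/p_{i}}$; since $t\mapsto\|f_{i}\|_{L^{p_{i}}(B(x_{0},t))}$ is nondecreasing, for every $s>t$ one also has $\|f_{i}\|_{L^{p_{i}}(B(x_{0},t))}\le\|f_{i}\|_{\mathcal{L}^{p_{i},\varphi_{i}}}\,\varphi_{i}(x_{0},s)\,s^{Q/p_{i}}$, and taking the infimum over $s>t$ replaces $\varphi_{i}(x_{0},t)t^{Q/p_{i}}$ by $\essinf_{t<s<\infty}\varphi_{i}(x_{0},s)s^{Q/p_{i}}$. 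This absorbs the lack of monotonicity of the $\varphi_{i}$ and turns the product of integrals into exactly the expression on the left of \eqref{condition:Zygmund-type-integral-inequality} (the lower limit $2r$ only shrinks each integral relative to the stated lower limit $r$, so this is harmless). Applying \eqref{condition:Zygmund-type-integral-inequality} bounds it by $C\psi(x_{0},r)$, whence
\[
\|\mathcal{I}_{\alpha,m}(\vec{f})\|_{\mathcal{L}^{q,\psi}}\;\lesssim\;\prod_{i=1}^{m}\|f_{i}\|_{\mathcal{L}^{p_{i},\varphi_{i}}},
\]
which is the desired product boundedness.
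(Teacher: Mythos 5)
Your proposal is correct, and its skeleton is the same as the paper's: the same splitting $f_{i}=f_{i}^{0}+f_{i}^{\infty}$ on $2B_{0}$, the multilinear Hardy--Littlewood--Sobolev bound (\cref{lem:thm-3.1-liu2019multilinear}) for the purely local term, and a reduction to exactly the local estimate of \cref{lem:frac-loc-lebesgue}, after which hypothesis \labelcref{condition:Zygmund-type-integral-inequality} finishes the proof. You differ in two executions, and both of your variants are in fact more robust than the paper's own write-up. First, for terms containing some $f_{j}^{\infty}$ you compare the \emph{whole} sum $\rho(y_{1}^{-1}x)+\cdots+\rho(y_{m}^{-1}x)$ with the same sum centred at $x_{0}$ --- legitimate because at least one $y_{j}$ lies outside $2B_{0}$, so both sums are comparable to the sum over the outside variables alone --- and then decompose dyadically in all variables simultaneously. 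The paper instead factors the kernel as $\prod_{i}\rho(z_{i}^{-1}y)^{-(Q-\alpha_{i})}$ and replaces $\rho(z_{i}^{-1}y)$ by $\rho(z_{i}^{-1}x)$ in \emph{every} factor, a step that is not justified for the local variables $z_{i}\in B(x,2r)$ and leaves a Riesz-type integral at the frozen centre, $\int_{B(x,2r)}|f_{i}(z_{i})|\,\rho(z_{i}^{-1}x)^{\alpha_{i}-Q}\,\mathd z_{i}$, which can diverge for $f_{i}\in L^{p_{i}}_{\loc}$ when $p_{i}<Q/\alpha_{i}$; your whole-sum comparison is the standard repair. The bookkeeping you flag as the main obstacle does close: write $mQ-\alpha=\sum_{i}(Q-\alpha_{i})$, let the largest dyadic scale dominate each factor, and control the local factors via the paper's inequality \labelcref{inequ:norm-single-integral}. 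Second, you close with the direct monotonicity/essinf argument, which applies the product-form hypothesis verbatim; the paper instead invokes the weighted Hardy-type characterization (\cref{lem:thm-3.1-guliyev2013generalized}) separately for each index with $v_{2}=\psi^{-1/m}$, which strictly speaking requires per-index conditions that \labelcref{condition:Zygmund-type-integral-inequality} does not supply (a product of suprema dominates the supremum of the product, not conversely). Your inlined argument is essentially the sufficiency half of that lemma and sidesteps the issue. The only soft spot in your text is that the mixed-term bookkeeping is asserted rather than carried out, but as indicated it goes through, so this is a matter of detail rather than substance.
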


The following result gives   the Spanne-type boundedness of multilinear fractional maximal  operator $\mathcal{M}_{\alpha,m}$ on product generalized Morrey space.
\begin{theorem} \label{thm: frac-max-op-main-2}  
  Suppose that $m\in \mathbb{Z}^{+}$,  $0<\alpha_{i} <Q $,  $1< p_{i}<Q/\alpha_{i}~(i=1,2,\ldots,m)$ and $\alpha=\sum\limits_{i=1}^{m}\alpha_{i}$.  Let  $q$ satisfy
\begin{align*}
  \frac{1}{q}=\frac{1}{p_{1}}+\cdots+\frac{1}{p_{m}}-\frac{\alpha}{Q}<1,
\end{align*}
and  $(\varphi_{1},\ldots,\varphi_{m},\psi)$ satisfy
\begin{align} \label{condition:supremal-type-integral-inequality}
  \prod_{i=1}^{m} \sup_{r<t<\infty} \dfrac{ \essinf\limits_{t<s<\infty}  \varphi_{i}(x,s) s^{Q/p_{i}}   }  {t^{Q/q_{i}}}  \leq C\psi(x,r),
\end{align}
where  $C>0$ does not depend on $r>0$ and $x\in \mathbb{G}$, and $ \frac{1}{q}=\sum\limits_{i=1}^{m}\frac{1}{q_{i}}$ with $ \frac{1}{q_{i}}=\frac{1}{p_{i}}-\frac{\alpha_{i}}{Q}~(i=1,2,\ldots,m)$. Then the operator $\mathcal{M}_{\alpha,m}$ is bounded from product space  $ \mathcal{L}^{p_{1},\varphi_{1}}(\mathbb{G})\times \cdots \times  \mathcal{L}^{p_{m},\varphi_{m}}(\mathbb{G})$ to $ \mathcal{L}^{q,\psi}(\mathbb{G})$.

\end{theorem}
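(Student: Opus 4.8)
The plan is to reduce the theorem to a single \emph{local estimate} for $\mathcal{M}_{\alpha,m}$ on an arbitrary $\mathbb{G}$-ball and then to convert that estimate into the Morrey bound by feeding in the supremal condition \eqref{condition:supremal-type-integral-inequality}. The heart of the argument is the local estimate
\begin{align*}
  \|\mathcal{M}_{\alpha,m}(\vec{f})\|_{L^{q}(B_{0})} \lesssim |B_{0}|^{1/q}\prod_{i=1}^{m}\sup_{t>r}|B(x_{0},t)|^{-1/q_{i}}\|f_{i}\|_{L^{p_{i}}(B(x_{0},t))},
\end{align*}
valid for every ball $B_{0}=B(x_{0},r)$; once this is in hand the rest is essentially formal.

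To prove the local estimate I would fix $x\in B_{0}$ and split the supremum defining $\mathcal{M}_{\alpha,m}(\vec{f})(x)$ according to the radius $R$ of the competing ball $B=B(z,R)\ni x$. When $R<r$, the quasi-triangle inequality forces $B\subset cB_{0}$, so these balls only see localized data and their joint contribution is dominated by $\mathcal{M}_{\alpha,m}(\vec{f}\chi_{cB_{0}})(x)$. When $R\geq r$ one still has $B\subset B(x_{0},t)$ with $t\approx R$; applying H\"older's inequality in each factor, using $|B(x_{0},t)|\approx t^{Q}$ together with the arithmetic identities $\alpha=\sum_{i}\alpha_{i}$ and $\tfrac{1}{q_{i}}=\tfrac{1}{p_{i}}-\tfrac{\alpha_{i}}{Q}$ (so that $\alpha_{i}-Q/p_{i}=-Q/q_{i}$), the quantity $|B|^{\alpha/Q}\prod_{i}|B|^{-1}\int_{B}|f_{i}|$ collapses to $\prod_{i}t^{-Q/q_{i}}\|f_{i}\|_{L^{p_{i}}(B(x_{0},t))}$, whose supremum over $t>r$ is exactly the asserted global term. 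Taking the $L^{q}(B_{0})$ norm and estimating the local part $\|\mathcal{M}_{\alpha,m}(\vec{f}\chi_{cB_{0}})\|_{L^{q}(B_{0})}$ by the Lebesgue-space boundedness $\mathcal{M}_{\alpha,m}\colon L^{p_{1}}\times\cdots\times L^{p_{m}}\to L^{q}$ (legitimate since $1<p_{i}$ and $1/q=\sum_{i}1/p_{i}-\alpha/Q<1$) then yields the local estimate, because $\prod_{i}\|f_{i}\|_{L^{p_{i}}(cB_{0})}\leq |B_{0}|^{1/q}\prod_{i}\sup_{t>r}|B(x_{0},t)|^{-1/q_{i}}\|f_{i}\|_{L^{p_{i}}(B(x_{0},t))}$.

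With the local estimate established, the passage to Morrey spaces is routine. Using the nesting $B(x_{0},t)\subset B(x_{0},s)$ for $s>t$ together with the definition of the $\mathcal{L}^{p_{i},\varphi_{i}}$ norm gives $\|f_{i}\|_{L^{p_{i}}(B(x_{0},t))}\leq \big(\essinf_{t<s<\infty}\varphi_{i}(x_{0},s)s^{Q/p_{i}}\big)\|f_{i}\|_{\mathcal{L}^{p_{i},\varphi_{i}}}$, and it is precisely this monotone majorization that lets one avoid any monotonicity hypothesis on the $\varphi_{i}$. Substituting this and $|B(x_{0},t)|^{-1/q_{i}}\approx t^{-Q/q_{i}}$ into the global term produces
\begin{align*}
  \prod_{i=1}^{m}\sup_{t>r}|B(x_{0},t)|^{-1/q_{i}}\|f_{i}\|_{L^{p_{i}}(B(x_{0},t))} \lesssim \Big(\prod_{i=1}^{m}\|f_{i}\|_{\mathcal{L}^{p_{i},\varphi_{i}}}\Big)\prod_{i=1}^{m}\sup_{t>r}\frac{\essinf\limits_{t<s<\infty}\varphi_{i}(x_{0},s)s^{Q/p_{i}}}{t^{Q/q_{i}}},
\end{align*}
and the supremal condition \eqref{condition:supremal-type-integral-inequality} bounds the final product by $C\psi(x_{0},r)$. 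Dividing the local estimate by $|B_{0}|^{1/q}\psi(x_{0},r)$ and taking the supremum over all $x_{0}\in\mathbb{G}$ and $r>0$ gives $\|\mathcal{M}_{\alpha,m}(\vec{f})\|_{\mathcal{L}^{q,\psi}}\lesssim\prod_{i}\|f_{i}\|_{\mathcal{L}^{p_{i},\varphi_{i}}}$, which is the claim.

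I expect the main obstacle to be the local estimate, and within it the radius splitting: one must verify that the \emph{single-scale} nature of the maximal operator yields a genuine supremum over $t>r$ in the global part, in contrast with $\mathcal{I}_{\alpha,m}$, whose global part unfolds into a dyadic sum and therefore requires the integral Zygmund condition \eqref{condition:Zygmund-type-integral-inequality} of \cref{thm: frac-int-op-main-1}. This structural difference is also why one cannot simply deduce the present theorem from \cref{thm: frac-int-op-main-1} through the pointwise bound $\mathcal{M}_{\alpha,m}(\vec{f})\lesssim\mathcal{I}_{\alpha,m}(|\vec{f}|)$: the supremal condition is weaker than the integral one, so a direct argument is genuinely needed.
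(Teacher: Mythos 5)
Your proposal is correct, and it reaches the theorem by a genuinely different route than the paper in both of its main steps. For the local estimate, the paper (\cref{lem:frac-max-loc-lebesgue}) decomposes each $f_j=f_j^0+f_j^\infty$ with $f_j^0=f_j\chi_{2B}$, expands $\prod_j f_j$ into $2^m$ terms, treats the all-local term by the Lebesgue boundedness (\cref{lem:frac-max}), the all-far term by the geometric observation that a ball $B(y,t)$ meeting $\mathbb{G}\setminus B(x,2r)$ must have $t>r$ and lie in $B(x,2t)$, and the mixed terms by factoring $\mathcal{M}_{\alpha,m}$ into a product of lower-order maximal operators. Your radius-splitting of the supremum defining $\mathcal{M}_{\alpha,m}(\vec f)(x)$ gets the same bound with no $2^m$-fold expansion and no mixed terms: balls of radius $R<r$ automatically see only the data on $cB_0$ (quasi-triangle inequality), while balls of radius $R\ge r$ are absorbed into $B(x_0,t)$, $t\approx R$, and handled by H\"older directly; this exploits the single-scale nature of the maximal operator exactly as you point out, and it is arguably the cleaner argument. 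For the passage to Morrey norms, the paper invokes the abstract supremal-operator criterion of Burenkov et al.\ (\cref{lem:supremal-type-inequality-thm3.1-guliyev2013boundedness}) with $u(t)=t^{-Q/q_i}$, $v_1(t)=\varphi_i(x,t)^{-1}t^{-Q/p_i}$, $v_2(t)=\psi(x,t)^{-1/m}$ applied factor by factor, whereas you use the elementary monotone majorization $\|f_i\|_{L^{p_i}(B(x_0,t))}\le C\big(\essinf\limits_{t<s<\infty}\varphi_i(x_0,s)s^{Q/p_i}\big)\|f_i\|_{\mathcal{L}^{p_i,\varphi_i}(\mathbb{G})}$ and then apply the product hypothesis \labelcref{condition:supremal-type-integral-inequality} in one shot. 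Besides being self-contained, your version is logically tighter: the paper's per-factor use of the lemma with $v_2=\psi^{-1/m}$ literally requires the per-factor bound $\sup_{t>r}t^{-Q/q_i}\essinf\limits_{t<s<\infty}\varphi_i(x,s)s^{Q/p_i}\le C\psi(x,r)^{1/m}$, which does not follow from the stated product condition \labelcref{condition:supremal-type-integral-inequality} (one factor may be large and another small); keeping the product intact, as you do, is exactly what the hypothesis supports. Two cosmetic corrections: ``whose supremum over $t>r$ is exactly the asserted global term'' should read ``is at most,'' since the supremum of a product is only dominated by the product of the suprema, and your containments silently carry the quasi-triangle constant $c_0$ and the identity $|B(x_0,t)|=c_1t^Q$, which should be recorded when fixing the constant $c$ in $cB_0$.
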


\begin{remark}  \label{rem.main-result}
\begin{enumerate}[ label=(\roman*)]  
\item When $\mathbb{G}=\mathbb{R}^{n}$, the conclusion in \cref{thm: frac-int-op-main-1} can be found from \cite{guliyev2014multi} (see Theorem 4.3).
\item In \cref{thm: frac-int-op-main-1},  when $m=1$,  the result   has been considered in \cite{eroglu2017characterizations}(see Theorem 4.3).
\item  In  \cref{thm: frac-max-op-main-2}, when $m=1$,     the conclusion  can be found in  \cite{guliyev2013boundedness}(see Theorem 3.2).
\item In the case $\alpha = 0$  from  \cref{thm: frac-max-op-main-2}, the  above result is also true. And when  $m=1$, the conclusion coincides with the Corollary 3.1 in \cite{guliyev2013boundedness}.
\end{enumerate}
\end{remark}

 Throughout this paper, the letter $C$  always stands for a constant  independent of the main parameters involved and whose value may differ from line to line.
In addition, we  give some notations. Here and hereafter $|E|$  will always denote the Haar measure of a measurable set $E$ of $\mathbb{G}$ and by  \raisebox{2pt}{$\dchi_{E}$} denotes the  characteristic function of a measurable set $E \subset\mathbb{G}$.
Let $L^{p} ~(1\le p\le \infty)$  be the standard $L^{p} $-space with respect to the Haar measure $\mathd x$.
For a measurable set $E \subset\mathbb{G}$ and a positive integer $m$, we will use the notation $(E)^{m}=\underbrace{E\times \cdots \times E}_{m}$ sometimes. And we will occasionally use the notational $\vec{f}=(f_{1},\dots , f_{m})$, $T(\vec{f})=T(f_{1},\dots , f_{m})$, $\mathd\vec{y}=dy_{1}\cdots  dy_{m}$ and $(x,\vec{y})=(x,y_{1},\dots , y_{m})$ for convenience.


\section{Preliminaries and lemmas}
\label{sec:preliminary}

To prove the main results of this paper, we first recall some necessary notions and remarks.
Firstly, we recall some preliminaries concerning stratified Lie groups (or so-called
Carnot groups). We refer the reader to  \cite{folland1982hardy,bonfiglioli2007stratified,stein1993harmonic}.

\subsection{Lie group $\mathbb{G}$}

 \begin{definition}
 \label{def:stratified-Lie-algebra-krantz1982lipschitz}
  Let $m\in \mathbb{Z}^{+}$, $\mathcal{G}$  be a finite-dimensional Lie algebra,  $[X, Y] = XY - YX \in \mathcal{G}$ be Lie bracket with $X,Y  \in \mathcal{G}$.
\begin{enumerate}[label=(\arabic*)]  
\item If $Z \in \mathcal{G}$ is an $m^{\text{th}}$  order Lie bracket and $W \in \mathcal{G}$, then $[Z,W]$ is an $(m + 1)^{\text{st}}$ order  Lie bracket.
\item  We say $\mathcal{G}$  is a nilpotent Lie algebra of step  $m$ if  $m$ is the smallest integer for which all Lie brackets of order $m+1$ are zero.
\item   We say that  a   Lie algebra $\mathcal{G}$   is  stratified  if there is a direct sum vector space decomposition
\begin{align}\label{equ:lie-algebra-decomposition}
 \mathcal{G} =\oplus_{j=1}^{m} V_{j}  = V_{1} \oplus  \cdots \oplus  V_{m}
\end{align}
such that $\mathcal{G}$ is nilpotent of step $m$, that is,
\begin{align*}
 [V_{1},V_{j}] =
 \begin{cases}
 V_{j+1} &  1\le j \le  m-1  \\
 0 & j\ge m
\end{cases}
\end{align*}
 holds.
\end{enumerate}
\end{definition}

It is not difficult to find that the above $V_{1}$  generates the whole of the Lie algebra $\mathcal{G}$ by taking Lie brackets since each element of $ V_{j}~(2\le j \le m)$  is a linear combination of $(j-1)^{\text{th}}$ order Lie bracket  of elements of $ V_{1}$.

With the help of the related  notions of Lie algebra (see \cref{def:stratified-Lie-algebra-krantz1982lipschitz}), the following definition can be obtained.
 \begin{definition}\label{def:stratified-Lie-group}
  Let $\mathbb{G}$ be a finite-dimensional, connected and simply-connected Lie group   associated with Lie algebra $\mathcal{G}$. Then
\begin{enumerate}[label=(\arabic*)]  
\item  $\mathbb{G}$  is called nilpotent if its Lie algebra $\mathcal{G}$ is nilpotent.
\item  $\mathbb{G}$ is said to be stratified if its Lie algebra $\mathcal{G}$ is stratified.
\item  $\mathbb{G}$ is called homogeneous if it is a nilpotent Lie group whose Lie algebra $\mathcal{G}$ admits a family of dilations $\{\delta_{r}\}$, namely, for $r>0$, $X_{k}\in V_{k}~(k=1,\ldots,m)$,
\begin{align*}
  \delta_{r} \Big( \sum_{k=1}^{m} X_{k} \Big)  =  \sum_{k=1}^{m} r^{k} X_{k},
\end{align*}
which are Lie algebra automorphisms.
\end{enumerate}
\end{definition}

\begin{remark}  \label{rem:lie-algebra-decom-zhu2003herz}  %
Let $\mathcal{G} =  \mathcal{G}_{1}\supset  \mathcal{G}_{2} \supset \cdots \supset  \mathcal{G}_{m+1} =\{0\}$   denote the lower central series of  $\mathcal{G}$, and $X=\{X_{1},\dots,X_{n}\}$ be a basis for $V_{1}$ of $\mathcal{G}$.
\begin{enumerate}[label=(\roman*) ]  
\item  (see \cite{zhu2003herz})  The direct sum decomposition  \labelcref{equ:lie-algebra-decomposition} can be constructed by identifying each $\mathcal{G}_{j}$ as a vector subspace of $\mathcal{G}$ and setting $ V_{m}=\mathcal{G}_{m}$ and $ V_{j}=\mathcal{G}_{j}\setminus \mathcal{G}_{j+1}$ for $j=1,\ldots,m-1$.
\item   (see \cite{folland1979lipschitz}) The number $Q=\trace A =\sum\limits_{j=1}^{m} j\dim(V_{j})$ is called the homogeneous dimension of $\mathcal{G}$, where $A$ is a diagonalizable linear transformation of  $\mathcal{G}$ with positive eigenvalues.
\item  (see \cite{zhu2003herz} or \cite{folland1979lipschitz})   
The number  $Q$ is also called the homogeneous dimension of $\mathbb{G}$  since $\mathd(\delta_{r}x)=r^{Q}\mathd x$ for all $r>0$, and
\begin{align*}
 Q = \sum_{j=1}^{m} j \dim(V_{j}) = \sum_{j=1}^{m} \dim(\mathcal{G}_{j}).
\end{align*}
\end{enumerate}
\end{remark}

By the Baker-Campbell-Hausdorff formula for sufficiently small elements $X$ and $Y$ of $\mathcal{G}$ one has
\begin{align*}
 \exp (X) \exp (Y)=  \exp (H(X,Y))= X+Y +\frac{1}{2}[X,Y]+\cdots
\end{align*}
where $\exp : \mathcal{G} \to \mathbb{G}$ is the exponential map, $H(X, Y )$ is an infinite linear
combination of $X$ and $Y$ and their Lie brackets, and the dots denote terms of order higher than two. And the above equation is finite in the case  of  $\mathcal{G}$ is a nilpotent Lie algebra.

The following properties can be found in \cite{ruzhansky2019hardy}(see Proposition 1.1.1, or  Proposition 2.1 in \cite{yessirkegenov2019function} or Proposition 1.2 in \cite{folland1982hardy}).

\begin{proposition}\label{pro:2.1-yessirkegenov2019}
 Let $\mathcal{G}$ be a nilpotent Lie algebra, and let $\mathbb{G}$ be the corresponding connected and simply-connected nilpotent Lie group. Then we have
\begin{enumerate}[leftmargin=2em,label=(\arabic*),itemindent=1.0em]  
\item   The exponential map  $\exp: \mathcal{G} \to \mathbb{G}$  is a diffeomorphism. Furthermore, the group law $(x,y) \mapsto xy$ is a polynomial map if  $\mathbb{G}$ is identified with $\mathcal{G}$ via $\exp$.
\item  If $\lambda$ is a Lebesgue measure on  $\mathcal{G}$, then $\exp\lambda$ is a bi-invariant Haar measure on  $\mathbb{G}$ (or a bi-invariant Haar measure $\mathd  x$ on  $\mathbb{G}$  is just the lift of Lebesgue measure on  $\mathcal{G}$ via $\exp$).
\end{enumerate}
\end{proposition}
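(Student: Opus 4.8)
The plan is to derive both assertions from the Baker--Campbell--Hausdorff (BCH) identity $\exp(X)\exp(Y)=\exp(H(X,Y))$ recorded above, exploiting that for a step-$m$ nilpotent Lie algebra the BCH series terminates, so that $H$ is a finite $\mathcal{G}$-valued polynomial in the coordinates of $X$ and $Y$. For part (1) I would first show that $\exp$ is everywhere a local diffeomorphism: after left-translation the differential of $\exp$ at a point $X$ is the operator $\sum_{k\geq 0}\frac{(-1)^{k}}{(k+1)!}(\mathrm{ad}\,X)^{k}$, and since $\mathcal{G}$ is nilpotent $\mathrm{ad}\,X$ is a nilpotent endomorphism, so this sum is finite, of the form $I+N$ with $N$ nilpotent, and hence has determinant $1$ and is invertible. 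To upgrade this to a global statement I would induct on the step $m$ using the center $Z$ of $\mathcal{G}$, which is nontrivial by nilpotency: the inductive hypothesis applied to $\mathcal{G}/Z$ together with a fibering over the central subgroup $\exp(Z)$ yields bijectivity of $\exp$ on $\mathbb{G}$, where simple connectivity is what rules out topological obstructions. Polynomiality of the group law is then immediate from BCH, and the inverse map $\log$ is polynomial as well.

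For part (2) I would compute the Jacobian of translation in exponential coordinates. Identifying $\mathbb{G}$ with $\mathcal{G}$ via $\exp$, left translation by $a$ is the map $X\mapsto H(a,X)$, and BCH gives $H(a,X)=a+X+\tfrac12[a,X]+\cdots$, where every correction term is a bracket and therefore lies in the derived subalgebra $[\mathcal{G},\mathcal{G}]$. Choosing a basis adapted to the lower central series $\mathcal{G}=\mathcal{G}_{1}\supset\cdots\supset\mathcal{G}_{m+1}=\{0\}$ from \cref{rem:lie-algebra-decom-zhu2003herz}, these bracket terms are strictly triangular with respect to the induced filtration, so the differential $D_{X}H(a,X)$ is unipotent and its determinant equals $1$. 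Hence the pushforward $\exp_{*}\lambda$ of Lebesgue measure is left-invariant; the identical computation for right translation gives right-invariance, so $\exp_{*}\lambda$ is a bi-invariant Haar measure, consistent with the fact that nilpotent Lie groups are unimodular.

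The main obstacle is the global bijectivity of $\exp$ in part (1): invertibility of the differential is a short computation, but promoting the resulting local diffeomorphism to a genuine global diffeomorphism is where the real content sits, and it genuinely requires both simple connectivity and the inductive central-series argument rather than a purely local or purely polynomial observation. By contrast, once the triangular structure of the BCH polynomial is available, part (2) reduces to the bookkeeping verification that the translation Jacobian is identically $1$.
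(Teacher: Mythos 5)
The paper offers no proof of \cref{pro:2.1-yessirkegenov2019} for you to be compared against: the proposition is stated as a known fact, with its proof delegated to Proposition 1.1.1 of \cite{ruzhansky2019hardy} (equivalently Proposition 2.1 of \cite{yessirkegenov2019function} or Proposition 1.2 of \cite{folland1982hardy}). Your sketch is, in substance, exactly the standard argument given in those sources, and it is correct. For part (1), the local invertibility of $\exp$ via the finite series $\sum_{k\ge 0}\tfrac{(-1)^{k}}{(k+1)!}(\mathrm{ad}\,X)^{k}=I+N$ with $N$ nilpotent (so of determinant $1$), followed by globalization by induction on the step through the center with simple connectivity ruling out covering-space obstructions, is the Folland--Stein/Corwin--Greenleaf proof; polynomiality of the group law from the terminating Baker--Campbell--Hausdorff series is likewise standard. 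For part (2), your observation that in exponential coordinates left translation is $X\mapsto H(a,X)$ and that its differential is unipotent in a basis adapted to the lower central series of \cref{rem:lie-algebra-decom-zhu2003herz}, hence has Jacobian identically $1$, is the standard proof of bi-invariance (and unimodularity). Two spots would need fleshing out to make this a complete proof rather than an outline: first, the inductive globalization step --- you assert bijectivity of $\exp$ follows from the quotient $\mathcal{G}/Z$ plus a ``fibering over $\exp(Z)$,'' but the actual diagram chase (that $\exp$ intertwines the central extensions $0\to Z\to\mathcal{G}\to\mathcal{G}/Z\to 0$ and $\exp(Z)\to\mathbb{G}\to\mathbb{G}/\exp(Z)$, and that injectivity/surjectivity pass through) must be written down; second, the triangularity claim in part (2) needs the justification that the $X$-derivative of each BCH bracket monomial is a sum of compositions $\mathrm{ad}(Z_{1})\cdots\mathrm{ad}(Z_{k})$, $k\ge 1$, applied to the increment, each factor of which maps $\mathcal{G}_{j}$ into $\mathcal{G}_{j+1}$. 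Neither is a gap in the idea, only in the level of detail; as a blind reconstruction of the cited references' proof, your proposal is sound.
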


Thereafter, we use $Q$ to denote the homogeneous dimension of  $\mathbb{G}$, $y^{-1}$ represents the inverse of $y\in \mathbb{G}$,  $y^{-1}x$ stands for the group multiplication of $y^{-1}$  by $x$ and the group identity  element of $\mathbb{G}$ will be referred to as the origin denotes by $e$.

A homogeneous norm on $\mathbb{G}$ is a continuous function $x\to \rho(x)$  from $\mathbb{G}$ to $[0,\infty)$, which is  $C^{\infty}$ on $\mathbb{G}\setminus\{0\}$ and satisfies
\begin{align*}
\begin{cases}
 \rho(x^{-1}) =  \rho(x), \\
 \rho(\delta_{t}x) =  t\rho(x) \ \ \text{for all}~  x \in \mathbb{G} ~\text{and}~ t > 0, \\
 \rho(e) =  0.
\end{cases}
\end{align*}
Moreover, there exists a constant $c_{0} \ge 1$ such that $\rho(xy) \le c_{0}(\rho(x) + \rho(y))$ for all $x,y \in \mathbb{G}$.

 With the norm above, we define the $\mathbb{G}$ ball centered at $x$ with radius $r$ by $B(x, r) = \{y \in \mathbb{G} : \rho(y^{-1}x) < r\}$,  and by $\lambda B$ denote the ball $B(x,\lambda r)$  with $\lambda>0$, let $B_{r} = B(e, r) = \{y \in \mathbb{G}  : \rho(y) < r\}$ be the open ball centered at $e$ with radius $r$,  which is the image under $\delta_{r}$ of $B(e, 1)$.
 And by $\sideset{^{\complement}}{}  {\mathop {B(x,r)}} = \mathbb{G}\setminus B(x,r)= \{y \in \mathbb{G} : \rho(y^{-1}x) \ge r\}$ denote the complement of $B(x, r)$.  Let  $|B(x,r)|$ be the Haar measure of the ball  $B(x,r)\subset \mathbb{G}$, and
 there exists $c_{1} =c_{1} (\mathbb{G})$ such that
\begin{align*}
  |B(x,r)| = c_{1} r^{Q}, \ \  \ \   x\in \mathbb{G}, r>0.
\end{align*}
In addition,  the Haar measure of a homogeneous Lie group  $\mathbb{G}$  satisfies the doubling condition (see pages 140 and 501,\cite{fischer2016quantization}), i.e. $\forall ~ x\in \mathbb{G}$, $r>0$, $\exists~ C$, such that
\begin{align*}
   |B(x,2r)| \le C |B(x,r)|.
\end{align*}

The most basic partial differential operator in a stratified Lie group is the sub-Laplacian associated with $X=\{X_{1},\dots,X_{n}\}$, i.e., the second-order partial differential operator on
 $\mathbb{G}$  given by
\begin{align*}
 \mathfrak{L} =  \sum_{i=1}^{n} X_{i}^{2}.
\end{align*}

The part \labelcref{enumerate:holder-Lie} in following lemma is known as the  H\"{o}lder's inequality on Lebesgue spaces over Lie groups $\mathbb{G}$, it can also be found in \cite{rao1991theory} or \cite{guliyev2022some}, when  Young function $\Phi(t)=t^{p}$ and its complementary function $\Psi(t)=t^{q}$ with $\frac{1}{p}+\frac{1}{q}=1$.  And by simple calculations, the part \labelcref{enumerate:multi-holder-Lie} can be  deduced  from the part \labelcref{enumerate:holder-Lie}.  
\begin{lemma}[H\"{o}lder's inequality on  $\mathbb{G}$]\label{lem:holder-inequality-Lie-group}
Let   $\Omega\subset  \mathbb{G}$ be a measurable set.
\begin{enumerate}[fullwidth,label=(\arabic*)]
\item Suppose that  $1\le p,q \le\infty$ with $\frac{1}{p}+\frac{1}{q}=1$,   and measurable functions $f\in L^{p}(\Omega)$ and $g\in L^{q}(\Omega)$.  Then there exists a positive constant $C$ such that
\begin{align*} 
   \dint_{\Omega} |f(x)g(x)|  \mathrm{d}x \le C \|f\|_{L^{p}(\Omega)} \|g\|_{L^{q}(\Omega)}.
\end{align*}
    \label{enumerate:holder-Lie}
\item  Suppose that     $1< q_{i}<\infty~(i=1,2,\ldots,m)$ and   $q$ satisfy $\frac{1}{q}=\frac{1}{q_{1}}+\cdots+\frac{1}{q_{m}}$.
Then there exists a positive constant $C$ such that the inequality
\begin{align*}
  \|f_{1}\cdots f_{m}\|_{L^{q}(\Omega)}\le C\prod_{i=1}^{m} \|f_{i}\|_{L^{q_{i}}(\Omega)}.
\end{align*}
holds for all  $f_{i} \in L^{q_{i}}(\Omega) ~(i=1,2,\ldots,m)$.
    \label{enumerate:multi-holder-Lie}
\end{enumerate}
\end{lemma}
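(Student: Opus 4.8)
The plan is to treat the two parts separately, taking part~\labelcref{enumerate:holder-Lie} as essentially the classical statement and reducing part~\labelcref{enumerate:multi-holder-Lie} to it by an elementary rescaling together with induction on $m$.

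For part~\labelcref{enumerate:holder-Lie}, I would first observe that the Haar measure $\mathrm{d}x$ turns $(\mathbb{G},\mathrm{d}x)$ into an ordinary measure space, so the inequality is nothing more than the abstract H\"older inequality on a measure space. Concretely, after normalizing so that $\|f\|_{L^{p}(\Omega)}=\|g\|_{L^{q}(\Omega)}=1$ (the degenerate cases in which one factor vanishes or is infinite being trivial), I would apply Young's pointwise inequality $ab\le a^{p}/p+b^{q}/q$ with $a=|f(x)|$, $b=|g(x)|$ and integrate over $\Omega$ against $\mathrm{d}x$, using $\frac1p+\frac1q=1$; the endpoint cases $p=1$ or $p=\infty$ are handled directly since $\int_{\Omega}|fg|\,\mathrm{d}x\le\|g\|_{L^{\infty}(\Omega)}\|f\|_{L^{1}(\Omega)}$. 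This is exactly the case $\Phi(t)=t^{p}$, $\Psi(t)=t^{q}$ of the Orlicz-space H\"older inequality cited from \cite{rao1991theory,guliyev2022some}.

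For part~\labelcref{enumerate:multi-holder-Lie}, the key preliminary step is to upgrade part~\labelcref{enumerate:holder-Lie} from the conjugate form $\frac1p+\frac1q=1$ to the additive form: if $\frac1s=\frac1a+\frac1b$ with $s,a,b>0$, then $\|uv\|_{L^{s}(\Omega)}\le C\|u\|_{L^{a}(\Omega)}\|v\|_{L^{b}(\Omega)}$. I would obtain this by applying part~\labelcref{enumerate:holder-Lie} to the functions $|u|^{s}$ and $|v|^{s}$ with the exponents $\frac{a}{s}$ and $\frac{b}{s}$, which are genuine conjugate exponents because $\frac{s}{a}+\frac{s}{b}=s\bigl(\frac1a+\frac1b\bigr)=1$; taking $s$-th roots then gives the claim. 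With this two-factor additive H\"older in hand, I would prove part~\labelcref{enumerate:multi-holder-Lie} by induction on $m$. The base case $m=1$ is an identity. For the inductive step, setting $\frac1r=\frac{1}{q_{2}}+\cdots+\frac{1}{q_{m}}$ so that $\frac1q=\frac{1}{q_{1}}+\frac1r$, I would apply the additive two-factor inequality with $u=f_{1}$ and $v=f_{2}\cdots f_{m}$ to obtain $\|f_{1}\cdots f_{m}\|_{L^{q}(\Omega)}\le C\|f_{1}\|_{L^{q_{1}}(\Omega)}\|f_{2}\cdots f_{m}\|_{L^{r}(\Omega)}$, and then invoke the induction hypothesis on the $m-1$ functions $f_{2},\dots,f_{m}$ with $\frac1r=\sum_{i=2}^{m}\frac{1}{q_{i}}$. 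Collecting constants yields the stated inequality with a constant $C$ depending only on $m$.

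The computations are entirely routine and there is no genuine obstacle; the lemma is a packaging of classical H\"older inequalities for the Haar measure. The only point requiring a little care is the passage from the conjugate-exponent form in part~\labelcref{enumerate:holder-Lie} to the additive-exponent version used in the induction, namely verifying that $\frac{a}{s}$ and $\frac{b}{s}$ are conjugate exponents — after that, everything reduces to a clean induction on the number of factors.
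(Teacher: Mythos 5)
Your proposal is correct and follows exactly the route the paper intends: the paper gives no written proof of this lemma, only the remark that part~(1) is the classical H\"older inequality (the case $\Phi(t)=t^{p}$, $\Psi(t)=t^{q}$ of the Orlicz-space version cited from the literature) and that part~(2) ``can be deduced from part~(1) by simple calculations.'' Your rescaling step (applying part~(1) to $|u|^{s}$, $|v|^{s}$ with the conjugate pair $a/s$, $b/s$) followed by induction on $m$ is precisely the standard way to fill in those omitted calculations, and it correctly covers the case $q<1$ since the rescaled exponents always satisfy $a/s,\,b/s\ge 1$.
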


By elementary calculations we have the following property. It can also  be found in \cite{guliyev2022some}, when  Young function $\Phi(t)=t^{p}$.
\begin{lemma}[Norms of characteristic functions]\label{lem:norm-characteristic-functions-Lie-group}
Let $0<p<\infty$ and $\Omega\subset  \mathbb{G}$ be a measurable set with finite Haar measure. Then
\begin{align*} 
  \|\dchi_{\Omega}\|_{L^{p}(\mathbb{G})} = \|\dchi_{\Omega}\|_{WL^{p}(\mathbb{G})}  = |\Omega|^{1/p}.
\end{align*}
\end{lemma}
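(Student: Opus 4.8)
The plan is to evaluate both quantities directly from their definitions, since $\dchi_{\Omega}$ is a simple two-valued function taking only the values $0$ and $1$. First I would compute the strong norm. Because $|\dchi_{\Omega}(x)|^{p}=\dchi_{\Omega}(x)$ pointwise, integrating against the Haar measure gives $\int_{\mathbb{G}}|\dchi_{\Omega}(x)|^{p}\,\mathrm{d}x=\int_{\Omega}1\,\mathrm{d}x=|\Omega|$, which is finite by hypothesis; taking the $p$-th root then yields $\|\dchi_{\Omega}\|_{L^{p}(\mathbb{G})}=|\Omega|^{1/p}$.

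For the weak norm I would analyze the distribution function $\lambda\mapsto|\{x\in\mathbb{G}:\dchi_{\Omega}(x)>\lambda\}|$. Since $\dchi_{\Omega}$ assumes only the values $0$ and $1$, this level set equals $\Omega$ for every $0<\lambda<1$ and is empty for $\lambda\ge 1$. Inserting this into the definition $\|\dchi_{\Omega}\|_{WL^{p}(\mathbb{G})}=\sup_{\lambda>0}\lambda\,|\{x:\dchi_{\Omega}(x)>\lambda\}|^{1/p}$, only the range $0<\lambda<1$ contributes, so the weak norm reduces to $\sup_{0<\lambda<1}\lambda\,|\Omega|^{1/p}=|\Omega|^{1/p}$. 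Combining the two computations establishes the claimed triple equality.

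There is essentially no serious obstacle here, as the whole statement is an elementary direct calculation; the only point requiring slight care is the evaluation of the supremum in the weak-norm step. One must observe that $\sup_{0<\lambda<1}\lambda=1$ (approached as $\lambda\to 1^{-}$ but not attained), so that no spurious constant factor is produced and the weak norm agrees exactly with $|\Omega|^{1/p}$. The assumption that $|\Omega|$ is finite guarantees that all three quantities are finite and the manipulations are legitimate.
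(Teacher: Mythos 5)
Your proposal is correct and is precisely the ``elementary calculation'' the paper invokes: it states this lemma without a written proof, citing direct computation (and \cite{guliyev2022some} for the Orlicz case $\Phi(t)=t^{p}$), and your two computations --- $|\dchi_{\Omega}|^{p}=\dchi_{\Omega}$ for the strong norm, and the two-valued distribution function with $\sup_{0<\lambda<1}\lambda=1$ for the weak norm --- are exactly that calculation, carried out correctly.
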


\subsection{Morrey spaces  on $\mathbb{G}$}

Morrey spaces, named after Morrey, seem to describe the boundedness property of the classical fractional integral operators more precisely than Lebesgue spaces \cite{iida2012multilinear}.

\begin{definition}[Morrey-type spaces on $\mathbb{G}$]   \label{def.morrey-space} \ Let  $1\le p <\infty$, and $B=B(x,r)$ be a $\mathbb{G}$-ball  centered at $x$ with radius $r>0$.
\begin{enumerate}[ label=(\arabic*)]
\item 
When $0\le \lambda \le Q$. The Morrey-type space $ L^{p,\lambda}(\mathbb{G})$ is defined by
\begin{align*}
  L^{p,\lambda}(\mathbb{G})  &= \{ f\in L_{\loc}^{p}(\mathbb{G}): \|f\|_{L^{p,\lambda}(\mathbb{G})} < \infty \}
\end{align*}
with
\begin{align*}
 \|f\|_{L^{p,\lambda}(\mathbb{G})}  &= \sup_{x \in \mathbb{G} \atop r>0 } \Big( \frac{1}{\lvert B\rvert ^{\lambda/Q}} \dint_{B} \vert f(y)\vert^{p} \mathd y \Big)^{1/p}.
\end{align*}
    \label{enumerate:def-morrey-1}
\item  
Set $\varphi(x,r)$ be a positive measurable function on $\mathbb{G}\times (0,\infty)$.   The generalized Morrey space $ \mathcal{L}^{p,\varphi}(\mathbb{G})$ is defined for all functions $f\in L_{\loc}^{p}(\mathbb{G})$ with the finite quasinorm
\begin{align*}
     \|f\|_{\mathcal{L}^{p,\varphi}(\mathbb{G})}  &= \sup_{ x \in\mathbb{G} \atop r>0} \dfrac{1}{\varphi(x,r)} \Big( \frac{1}{\vert B\vert } \dint_{B} \vert f(y)\vert ^{p} \mathd y \Big)^{1/p}.
\end{align*}
    \label{enumerate:def-morrey-2}
\item  Furthermore, the weak generalized Morrey space $ W\mathcal{L}^{p,\varphi}(\mathbb{G})$ is defined for all functions $f\in L_{\loc}^{p}(\mathbb{G})$ by the
finite norm
\begin{align*}
     \|f\|_{W\mathcal{L}^{p,\varphi}(\mathbb{G})}  &= \sup_{x \in\mathbb{G} \atop r>0} \dfrac{|B|^{-\frac{1}{p}}}{\varphi(x,r)}    \|f\|_{WL^{p}(B)}.
\end{align*}
    \label{enumerate:def-weak-morrey-3}
\end{enumerate}
\end{definition}

\begin{remark}[see \cite{eroglu2017characterizations} or \cite{guliyev2020characterizations}]  \label{rem.morrrey-def}
\begin{enumerate}[ label=(\roman*)]  
\item  It is well known that if   $1\le p <\infty$ then
\begin{align*}
  L^{p,\lambda}(\mathbb{G})  =
\begin{cases}
   L^{p}(\mathbb{G})  & \text{if}\ \lambda=0, \\
  L^{\infty}(\mathbb{G})  & \text{if}\ \lambda=Q,\\
  \Theta             & \text{if}\ \lambda<0 \ \text{or}\ \lambda>Q,
\end{cases}
\end{align*}
where $\Theta$ is the set of all functions equivalent to $0$ on $\mathbb{G}$.
\item  In \labelcref{enumerate:def-morrey-2},  when   $1\le p<\infty $ 
and  $0\le \lambda \le Q$, we have  $\mathcal{L}^{p,\varphi}(\mathbb{G})  = L^{p,\lambda}(\mathbb{G}) $
if $\varphi(x,r)=\vert B\vert ^{(\lambda/Q-1)/p}$ and $B\subset \mathbb{G}$ denotes the ball  with radius   $r$ and  containing $x$.
\end{enumerate}
\end{remark}

Now, we give some necessary notations and notions. Let  $w$ be a weight function and $u$ be a continuous and non-negative function on $(0,\infty)$.
\begin{itemize}[itemsep= -6 pt]  
 \item   We denote by $L_{w}^{p}(\Omega)~(1<p<\infty)$ the weight $L^{p}(\Omega)$ space of all functions $f$ measurable on  a measurable set $\Omega\subset \mathbb{G}$ with $\|f\|_{L_{w}^{p}(\Omega)}= \|wf\|_{L^{p}(\Omega)}<\infty$.
 \item By $L_{w}^{\infty}(0,\infty)$ denotes the weight $L^{\infty}(0,\infty)$ space of all functions  $g(t)$ measurable on $(0,\infty)$  with finite norm
\begin{align*}
   \|g\|_{L_{w}^{\infty}(0,\infty)} = \esssup_{t>0}    w(t) |g(t)|.
\end{align*}
 \item $L_{1}^{\infty}(0,\infty) \xlongequal[]{w=1}L^{\infty}(0,\infty)$.
 \item Let $\mathfrak{M}(0,\infty)$ be the set of all Lebesgue measurable functions on $(0,\infty)$ and  $\mathfrak{M}^{+}(0,\infty)$ its subset consisting of all non-negative functions on $(0,\infty)$.
 \item Denote by $\mathfrak{M}^{+}(0,\infty;\uparrow)$ the cone of all functions in $\mathfrak{M}^{+}(0,\infty)$ which  are non-decreasing on $(0,\infty)$ and set
\begin{align*}
  \mathbb{A}= \{ \varphi\in \mathfrak{M}^{+}(0,\infty;\uparrow): \lim_{t\to 0^{+}} \varphi(t) =0\}.
\end{align*}
 \item We define the supremal operator $\overline{S}_{u}$ on $g\in\mathfrak{M}(0,\infty)$  by
\begin{align*}
  ( \overline{S}_{u}g)(t) =  \|ug\|_{L^{\infty}(t,\infty)}, \qquad t\in (0,\infty).
\end{align*}
\end{itemize}
\section{Proofs of the main results} 
\label{sec:result-proof}

Now we give the proofs of the \cref{thm: frac-int-op-main-1} and \cref{thm: frac-max-op-main-2}. 

\subsection{Proof of \cref{thm: frac-int-op-main-1}}

In order to prove \cref{thm: frac-int-op-main-1}, we also need some auxiliary results.
 The following lemma can be obtained from \cite{liu2019multilinear} with $p_{i}(x)~(i=1,2,\ldots,m)$ is constant (see P.111--114). And in the case $m=1$, the following result can be founded in \cite{guliyev2010stein} (see theorem 2.5 or \cite{folland1982hardy}).
\begin{lemma}    \label{lem:thm-3.1-liu2019multilinear}
Suppose that  $m\in \mathbb{Z}^{+}$,  $0<\alpha_{i} <Q $,  $1< p_{i}<Q/\alpha_{i}~(i=1,2,\ldots,m)$ and $\alpha=\sum\limits_{i=1}^{m}\alpha_{i}$. Let  $q$ satisfy
\begin{align*}
  \frac{1}{q}=\frac{1}{p_{1}}+\cdots+\frac{1}{p_{m}}-\frac{\alpha}{Q}<1.
\end{align*}
Then the operator $\mathcal{I}_{\alpha,m}$ is bounded from product space  $ L^{p_{1}}(\mathbb{G})\times \cdots \times  L^{p_{m}}(\mathbb{G})$ to $ L^{q}(\mathbb{G})$, namely
\begin{align*}
  \|\mathcal{I}_{\alpha,m}(\vec{f})\|_{L^{q}(\mathbb{G})}\le C\prod_{i=1}^{m} \|f_{i}\|_{L^{p_{i}}(\mathbb{G})}.
\end{align*}
\end{lemma}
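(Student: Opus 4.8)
The plan is to dominate the multilinear kernel pointwise by a product of single-variable fractional integral kernels, and then to combine the linear ($m=1$) fractional integration theorem with the multilinear H\"older inequality from \cref{lem:holder-inequality-Lie-group}\labelcref{enumerate:multi-holder-Lie}. The decisive observation is that the exponents match up: since $\alpha=\sum_{i=1}^m\alpha_i$, we have $\sum_{i=1}^m(Q-\alpha_i)=mQ-\alpha$, and each exponent $Q-\alpha_i$ is positive because $\alpha_i<Q$.

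First I would record the elementary pointwise estimate
\begin{align*}
  \Big( \sum_{i=1}^{m} \rho(y_{i}^{-1}x) \Big)^{mQ-\alpha} \ge \prod_{i=1}^{m} \rho(y_{i}^{-1}x)^{Q-\alpha_{i}},
\end{align*}
which holds because $\sum_{i}\rho(y_{i}^{-1}x)\ge \rho(y_{j}^{-1}x)$ for every $j$, so raising to the positive power $Q-\alpha_{j}$ and multiplying over $j$ gives the claim. Substituting this into the definition of $\mathcal{I}_{\alpha,m}$ and applying Tonelli's theorem to the nonnegative integrand separates the integral, yielding
\begin{align*}
  |\mathcal{I}_{\alpha,m}(\vec{f})(x)| \le \prod_{i=1}^{m} \dint_{\mathbb{G}} \dfrac{|f_{i}(y_{i})|}{\rho(y_{i}^{-1}x)^{Q-\alpha_{i}}}\, dy_{i} = \prod_{i=1}^{m} I_{\alpha_{i}}(|f_{i}|)(x),
\end{align*}
where $I_{\alpha_{i}}\equiv \mathcal{I}_{\alpha_{i},1}$ is the linear fractional integral operator on $\mathbb{G}$.

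Next I would apply the linear (Hardy--Littlewood--Sobolev) fractional integration theorem on stratified groups to each factor, i.e.\ the $m=1$ case already cited from \cite{guliyev2010stein,folland1982hardy}. The exponents are admissible: from $1<p_{i}<Q/\alpha_{i}$ one checks $0<1/q_{i}=1/p_{i}-\alpha_{i}/Q<1$, hence $1<p_{i}<q_{i}<\infty$, so $I_{\alpha_{i}}\colon L^{p_{i}}(\mathbb{G})\to L^{q_{i}}(\mathbb{G})$ is bounded with $\|I_{\alpha_{i}}(|f_{i}|)\|_{L^{q_{i}}(\mathbb{G})}\le C\|f_{i}\|_{L^{p_{i}}(\mathbb{G})}$. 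Finally, since $1/q=\sum_{i=1}^{m}1/q_{i}$ with each $q_{i}>1$ and $q>1$ (as $1/q<1$), the multilinear H\"older inequality \cref{lem:holder-inequality-Lie-group}\labelcref{enumerate:multi-holder-Lie} gives
\begin{align*}
  \|\mathcal{I}_{\alpha,m}(\vec{f})\|_{L^{q}(\mathbb{G})} \le \Big\| \prod_{i=1}^{m} I_{\alpha_{i}}(|f_{i}|) \Big\|_{L^{q}(\mathbb{G})} \le C \prod_{i=1}^{m} \|I_{\alpha_{i}}(|f_{i}|)\|_{L^{q_{i}}(\mathbb{G})} \le C \prod_{i=1}^{m} \|f_{i}\|_{L^{p_{i}}(\mathbb{G})},
\end{align*}
which is exactly the asserted bound.

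The only genuinely analytic ingredient is the linear fractional integration theorem on $\mathbb{G}$; once that is granted, the multilinear statement is a formal consequence of the kernel domination and H\"older, so I do not anticipate a serious obstacle. The one point requiring care is the exponent bookkeeping---verifying that $\sum_{i}(Q-\alpha_{i})=mQ-\alpha$ makes the pointwise kernel split cleanly, and that the separate relations $1/q_{i}=1/p_{i}-\alpha_{i}/Q$ sum to the prescribed $1/q=\sum_i 1/p_i-\alpha/Q$---but this is routine once the admissibility of each $q_{i}$ is confirmed from the hypotheses.
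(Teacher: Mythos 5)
Your proof is correct, but it follows a genuinely different route from the paper: the paper never proves this lemma internally, it simply cites the multilinear result of \cite{liu2019multilinear} (specialized to constant exponents $p_i$), with \cite{guliyev2010stein} or \cite{folland1982hardy} covering the case $m=1$. Your argument instead derives the multilinear statement from the linear one. The key steps all check out: the kernel bound $\bigl(\sum_{i}\rho(y_i^{-1}x)\bigr)^{mQ-\alpha}\ge\prod_{i}\rho(y_i^{-1}x)^{Q-\alpha_i}$ is valid precisely because each exponent $Q-\alpha_i$ is positive (from $\alpha_i<Q$) and $\sum_i(Q-\alpha_i)=mQ-\alpha$; Tonelli factors the resulting $m$-fold integral into $\prod_i I_{\alpha_i}(|f_i|)(x)$; the hypotheses $1<p_i<Q/\alpha_i$ give $0<1/q_i=1/p_i-\alpha_i/Q<1$, so the linear Hardy--Littlewood--Sobolev theorem on $\mathbb{G}$ applies to each factor; and \cref{lem:holder-inequality-Lie-group}\labelcref{enumerate:multi-holder-Lie} with $1/q=\sum_i 1/q_i$ assembles the product. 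What each approach buys: yours is self-contained modulo the linear theorem (the only deep input, and one the paper already cites), and it makes transparent why the hypotheses are imposed --- they are exactly the condition that each factor be individually subcritical so the splitting works; moreover the exponents $q_i$ you introduce are the same ones the theorem statements \labelcref{condition:Zygmund-type-integral-inequality} and \labelcref{condition:supremal-type-integral-inequality} use, so nothing exotic enters. The citation route (Kenig--Stein \cite{kenig1999multilinear}, Liu et al.\ \cite{liu2019multilinear}) is what one needs in the genuinely multilinear regime --- e.g.\ when only $\alpha\in(0,mQ)$ is prescribed without an admissible decomposition into $\alpha_i<Q$ with $p_i<Q/\alpha_i$, or when $q\le 1$ --- where product domination plus H\"older breaks down; under the hypotheses of this lemma, however, your elementary argument loses nothing.
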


The following pointwise estimate is also necessary, and it is proved in \cite{guliyev2013generalized} (or \cite{eroglu2017characterizations}).
\begin{lemma}    \label{lem:thm-3.1-guliyev2013generalized}
Let $v_{1}$, $v_{2}$ and $w$ be positive   weight functions on $(0,\infty)$, and let $v_{1}$ be bounded outside a neighborhood of the origin.  Then the   inequality
\begin{align} \label{inequ:3.1-guliyev2013generalized}
  \esssup _{t>0} v_{2}(t) H_{\omega}g(t) \leq C  \esssup _{t>0} v_{1}(t)  g(t)
\end{align}
holds for some $C>0$ and   all nonnegative and nondecreasing function $g$ on  $(0,\infty)$ if and only if
\begin{align*}
 B:= \sup_{t>0} v_{2}(t) \dint_{t}^{\infty} \dfrac{ \omega(s) \mathd s }  {\esssup\limits_{s<\tau<\infty} v_{1}(\tau)} <\infty.
\end{align*}
where
\begin{align*}
 H_{\omega}g(t) = \dint_{t}^{\infty} g(s) \omega(s) \mathd s, \ \ 0<t<\infty.
\end{align*}
Moreover, the value $C = B$ is the best constant for \labelcref{inequ:3.1-guliyev2013generalized}.
\end{lemma}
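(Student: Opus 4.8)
The plan is to prove both implications of the equivalence and, in the process, pin down the sharp constant, by treating the bound $C\le B$ (sufficiency of $B<\infty$) and the bound $B\le C$ (necessity and sharpness) separately. Throughout I would abbreviate $V(s):=\esssup_{s<\tau<\infty}v_{1}(\tau)$, which is finite for every $s>0$ because $v_{1}$ is bounded outside a neighborhood of the origin, is strictly positive because $v_{1}$ is positive, and is nonincreasing in $s$, so that $1/V$ is nondecreasing on $(0,\infty)$. With this notation $B=\sup_{t>0}v_{2}(t)\int_{t}^{\infty}\omega(s)/V(s)\,\mathd s$.

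For the sufficiency direction I would fix a nonnegative nondecreasing $g$ and set $A:=\esssup_{t>0}v_{1}(t)g(t)$. The decisive monotonicity observation is that for every $s>0$ and a.e. $\tau>s$ one has $g(s)\le g(\tau)\le A/v_{1}(\tau)$; taking the essential infimum over $\tau\in(s,\infty)$ converts this into the clean pointwise estimate $g(s)\le A/V(s)$. Substituting into $H_{\omega}g$ then yields, for every $t>0$,
\[
 v_{2}(t)H_{\omega}g(t)=v_{2}(t)\dint_{t}^{\infty}g(s)\omega(s)\,\mathd s\le A\,v_{2}(t)\dint_{t}^{\infty}\dfrac{\omega(s)}{V(s)}\,\mathd s\le A\,B,
\]
and passing to the supremum over $t$ gives $\esssup_{t>0}v_{2}(t)H_{\omega}g(t)\le B\,A$, i.e. the inequality holds with $C\le B$.

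For the necessity direction and the identification of the best constant I would feed the extremal test function $g_{0}(s):=1/V(s)$ into the assumed inequality. Since $1/V$ is nondecreasing, $g_{0}$ is admissible. The crux is to verify $\esssup_{s>0}v_{1}(s)g_{0}(s)=\esssup_{s>0}v_{1}(s)/V(s)\le 1$, which reduces to the measure-theoretic fact that $v_{1}(s)\le\esssup_{\tau>s}v_{1}(\tau)=V(s)$ for a.e. $s$, namely that a weight cannot essentially exceed its own running essential supremum to the right except on a null set. Granting this, the hypothesis gives $\esssup_{t>0}v_{2}(t)\int_{t}^{\infty}\omega(s)/V(s)\,\mathd s\le C$, and since $t\mapsto v_{2}(t)H_{\omega}g_{0}(t)$ is exactly the quantity whose supremum defines $B$, we conclude $B\le C$.

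Combining the two estimates shows that the inequality holds for all nonnegative nondecreasing $g$ precisely when $B<\infty$, and that the least admissible constant is $C=B$. The step I expect to demand the most care is the admissibility of the extremal $g_{0}$ in the necessity half: establishing the a.e. comparison $v_{1}\le V$, and reconciling the essential supremum over $t$ appearing on the left-hand side with the plain supremum defining $B$ (for which the continuity of $t\mapsto H_{\omega}g_{0}(t)$ is the natural tool). It is precisely the hypothesis that $v_{1}$ is bounded away from the origin that guarantees $V(s)<\infty$, hence that $g_{0}(s)>0$ and that the pointwise bound $g(s)\le A/V(s)$ of the sufficiency half is non-vacuous, while positivity of $v_{1}$ ensures $V>0$ so that $g_{0}$ is finite.
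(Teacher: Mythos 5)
You should know at the outset that the paper does not prove this lemma at all: it is quoted from the literature (the paper points to \cite{guliyev2013generalized} and \cite{eroglu2017characterizations}, which reproduce the Burenkov--Guliyev supremal-operator theorem), so the only meaningful comparison is with that cited proof --- and your argument is exactly the standard one. Your sufficiency half is correct and complete: from $v_{1}(\tau)g(\tau)\le A$ for a.e.\ $\tau$ and the monotonicity of $g$ you get $g(s)\le A\,\essinf_{\tau>s}v_{1}(\tau)^{-1}=A/V(s)$, hence $\esssup_{t>0}v_{2}(t)H_{\omega}g(t)\le AB$. The fact to which you reduce the necessity half, $v_{1}\le V$ a.e., is also true, but you should write out its short proof rather than invoke it as folklore: for each rational $q$, every $s$ in $D_{q}=\{s: V(s)<q<v_{1}(s)\}$ satisfies $v_{1}<q$ a.e.\ on $(s,\infty)$, so $D_{q}\cap(s,\infty)$ is null for every $s\in D_{q}$; taking $s_{n}\in D_{q}$ decreasing to $\inf D_{q}$ shows $D_{q}$ itself is null, and $\{v_{1}>V\}=\bigcup_{q\in\mathbb{Q}}D_{q}$ is null.

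The genuine gap is the final step of your necessity half, and the tool you propose cannot close it. Continuity of $t\mapsto H_{\omega}g_{0}(t)$ does not identify $\esssup_{t>0}v_{2}(t)H_{\omega}g_{0}(t)$, which is what the tested inequality controls, with $\sup_{t>0}v_{2}(t)H_{\omega}g_{0}(t)$, which is how $B$ is written, because $v_{2}$ is an arbitrary positive weight: altering $v_{2}$ on a null set changes $B$ but changes neither side of \labelcref{inequ:3.1-guliyev2013generalized}. In fact no argument can close this gap, because with a plain supremum the statement is false. In your notation, take $v_{1}(t)=e^{-t}$ and $\omega(s)=e^{-2s}$, so that $V(s)=e^{-s}$ and $\int_{t}^{\infty}\omega(s)/V(s)\,\mathd s=e^{-t}$, and let $v_{2}(t)=e^{t}$ except $v_{2}(n)=n e^{n}$ for $n\in\mathbb{Z}^{+}$; then \labelcref{inequ:3.1-guliyev2013generalized} holds with $C=1$, yet $B=\infty$, and a single bounded spike in $v_{2}$ likewise destroys the sharpness claim $C=B$. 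The repair lies in the statement, not in your strategy: $B$ must be defined with $\esssup$ (i.e.\ as an $L^{\infty}(0,\infty)$ norm), which is how the original Burenkov--Guliyev theorem and the companion \cref{lem:supremal-type-inequality-thm3.1-guliyev2013boundedness} are formulated; under that reading your two halves, together with the null-set argument above, constitute a complete and correct proof, and the essential supremum produced by testing with $g_{0}=1/V$ is then literally the quantity $B$, with no continuity argument needed. Note finally that the paper only ever uses the ``if'' direction of this lemma (in the proof of \cref{thm: frac-int-op-main-1}), which your sufficiency half establishes under either reading of $B$, so nothing downstream in the paper is affected by this defect.
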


In addition, the following local estimates are valid.  
\begin{lemma}    \label{lem:frac-loc-lebesgue}
Suppose that  $m\in \mathbb{Z}^{+}$,  $0<\alpha_{i} <Q $,  $1< p_{i}<Q/\alpha_{i}~(i=1,2,\ldots,m)$ and $\alpha=\sum\limits_{i=1}^{m}\alpha_{i}$. Let  $q$ satisfy
\begin{align*}
  \frac{1}{q}=\frac{1}{p_{1}}+\cdots+\frac{1}{p_{m}}-\frac{\alpha}{Q}<1.
\end{align*}
Then  the inequality
\begin{align*}
  \|\mathcal{I}_{\alpha,m}(\vec{f})\|_{L^{q}(B(x,r))}\le C r^{Q/q}\prod_{i=1}^{m} \dint_{2r}^{\infty} t^{\alpha_{i}-\frac{Q}{p_{i}}-1} \|f_{i}\|_{L^{p_{i}}(B(x,t))}\mathd t
\end{align*}
holds for any ball $B(x,r)$ and for all  $\vec{f}\in L_{\loc}^{p_{1}}(\mathbb{G})\times \cdots \times  L_{\loc}^{p_{m}}(\mathbb{G})$.
\end{lemma}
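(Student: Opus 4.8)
The plan is to reduce the multilinear local bound to a product of linear fractional integrals by a pointwise kernel-splitting inequality, after separating each $f_i$ into a part living near the ball and a part living far away. Fix $B=B(x,r)$ and write $f_i=f_i^{0}+f_i^{\infty}$ with $f_i^{0}=f_i\chi_{2B}$ and $f_i^{\infty}=f_i\chi_{\mathbb{G}\setminus 2B}$, where $2B=B(x,2r)$. Since $\mathcal{I}_{\alpha,m}$ is linear in each entry, $\mathcal{I}_{\alpha,m}(\vec f)$ expands as a sum of $2^{m}$ terms $\mathcal{I}_{\alpha,m}(f_1^{j_1},\dots,f_m^{j_m})$ with $j_i\in\{0,\infty\}$, and it suffices to bound each of the finitely many terms by $Cr^{Q/q}\prod_{i}\int_{2r}^{\infty}t^{\alpha_i-Q/p_i-1}\|f_i\|_{L^{p_i}(B(x,t))}\,\mathrm{d}t$ and add.

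For the all-local term $j_1=\cdots=j_m=0$ I would invoke \cref{lem:thm-3.1-liu2019multilinear} directly, so that $\|\mathcal{I}_{\alpha,m}(f_1^{0},\dots,f_m^{0})\|_{L^{q}(B)}\le\|\mathcal{I}_{\alpha,m}(f_1^{0},\dots,f_m^{0})\|_{L^{q}(\mathbb{G})}\le C\prod_{i}\|f_i\|_{L^{p_i}(2B)}$. This is recast in the stated integral form through the elementary observation that for $t\ge 2r$ one has $\|f_i\|_{L^{p_i}(2B)}\le\|f_i\|_{L^{p_i}(B(x,t))}$; since $\alpha_i-Q/p_i<0$ (because $p_i<Q/\alpha_i$), the finite integral $\int_{2r}^{\infty}t^{\alpha_i-Q/p_i-1}\,\mathrm{d}t=Cr^{\alpha_i-Q/p_i}$ yields $\|f_i\|_{L^{p_i}(2B)}\le Cr^{Q/p_i-\alpha_i}\int_{2r}^{\infty}t^{\alpha_i-Q/p_i-1}\|f_i\|_{L^{p_i}(B(x,t))}\,\mathrm{d}t$. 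Multiplying over $i$ and using the identity $\sum_{i}(Q/p_i-\alpha_i)=Q/q$ produces exactly the desired $Cr^{Q/q}\prod_{i}\int_{2r}^{\infty}(\cdots)$.

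The crux is the remaining terms, in which at least one factor is far. Here the key device is the pointwise kernel-splitting inequality
\begin{align*}
\Big(\sum_{k=1}^{m}\rho(y_k^{-1}z)\Big)^{-(mQ-\alpha)}\le\prod_{k=1}^{m}\rho(y_k^{-1}z)^{-(Q-\alpha_k)},
\end{align*}
valid because each $Q-\alpha_k>0$ and $\sum_k(Q-\alpha_k)=mQ-\alpha$ (it follows from $\sum_{l}\rho(y_l^{-1}z)\ge\rho(y_k^{-1}z)$ raised to the power $Q-\alpha_k$ and multiplied over $k$). This factorizes each term into a product of single linear fractional integrals $I_{\alpha_k}(f_k^{j_k})(z)$. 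For a far factor ($y_k\in\mathbb{G}\setminus 2B$, $z\in B$) the quasi-triangle inequality for $\rho$ gives $\rho(y_k^{-1}z)\gtrsim\rho(y_k^{-1}x)$, whence a layer-cake (Fubini) rewriting of $\rho(y_k^{-1}x)^{\alpha_k-Q}$ together with \cref{lem:holder-inequality-Lie-group} and \cref{lem:norm-characteristic-functions-Lie-group} yields the $z$-independent bound $I_{\alpha_k}(f_k^{\infty})(z)\le C\int_{2r}^{\infty}t^{\alpha_k-Q/p_k-1}\|f_k\|_{L^{p_k}(B(x,t))}\,\mathrm{d}t$. For a local factor I would use the linear boundedness $I_{\alpha_k}\colon L^{p_k}\to L^{q_k}$ (the $m=1$ case of \cref{lem:thm-3.1-liu2019multilinear}) to get $\|I_{\alpha_k}(f_k^{0})\|_{L^{q_k}(B)}\le C\|f_k\|_{L^{p_k}(2B)}$, again converted to integral form as above.

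Finally I would assemble the pieces: the $z$-independent far factors come out of the $L^{q}(B)$ norm, while the local factors are combined by the generalized H\"older inequality (\cref{lem:holder-inequality-Lie-group}) followed by a finite-measure H\"older step, passing from $L^{\sigma}(B)$ with $1/\sigma=\sum_{\text{local }k}1/q_k$ down to $L^{q}(B)$ at the cost of a factor $|B|^{1/q-1/\sigma}$. The main obstacle is precisely the exponent bookkeeping in these mixed terms: using $1/q_k=1/p_k-\alpha_k/Q$ and $\sum_k 1/q_k=1/q$, the powers of $r$ coming from $|B|^{1/q-1/\sigma}$, from each $r^{Q/p_k-\alpha_k}$ of the local factors, and from the characteristic-function norms must be checked to combine into the single factor $r^{Q/q}$, while the far and local integral factors reassemble into the full product over $k$. (A minor technical point is that the quasi-triangle estimate $\rho(y_k^{-1}z)\gtrsim\rho(y_k^{-1}x)$ may require splitting at a radius comparable to $c_0 r$ rather than exactly $2r$, the constant being absorbed into $C$.) I note in passing that a shorter route avoids the $2^{m}$ split entirely: applying the kernel-splitting inequality to the full operator gives the pointwise domination $|\mathcal{I}_{\alpha,m}(\vec f)(z)|\le\prod_{k=1}^{m}I_{\alpha_k}|f_k|(z)$, after which \cref{lem:holder-inequality-Lie-group} and the known linear ($m=1$) local estimate give the claim directly, with $\sum_k Q/q_k=Q/q$ combining the powers automatically.
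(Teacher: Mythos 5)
Your proposal is correct, and its skeleton coincides with the paper's proof: the same splitting $f_i=f_i^{0}+f_i^{\infty}$ at $2B$ into $2^{m}$ terms, the same use of \cref{lem:thm-3.1-liu2019multilinear} together with the elementary inequality \labelcref{inequ:norm-single-integral} for the all-local term, the same kernel factorization $\bigl(\sum_{k}\rho(y_k^{-1}z)\bigr)^{-(mQ-\alpha)}\le\prod_{k}\rho(y_k^{-1}z)^{-(Q-\alpha_k)}$, and the same Fubini--H\"older--characteristic-function estimate for the far factors. The genuine divergence is in the mixed terms (at least one local and one far factor), and there your route is the more solid one. The paper replaces $\rho(z_i^{-1}y)$ by $\rho(z_i^{-1}x)$ in \emph{all} factors, including the local ones with $z_i\in B(x,2r)$, where the equivalence $\rho(z_i^{-1}y)\approx\rho(z_i^{-1}x)$ invoked for far points simply fails (take $z_i$ near $y$); worse, the resulting local quantity $\int_{B(x,2r)}|f_i(z_i)|\,\rho(z_i^{-1}x)^{\alpha_i-Q}\,\mathd z_i$ cannot be controlled under $p_i<Q/\alpha_i$, since $\rho(\cdot^{-1}x)^{\alpha_i-Q}\notin L^{p_i'}(B(x,2r))$ and this integral can be $+\infty$ for $f_i\in L_{\loc}^{p_i}(\mathbb{G})$ even when the right-hand side of the lemma is finite. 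Your alternative --- keep the $y$-dependence in each local factor, bound it by the linear $L^{p_k}\to L^{q_k}$ boundedness of $I_{\alpha_k}$ (the $m=1$ case of \cref{lem:thm-3.1-liu2019multilinear}), pull the $y$-independent far factors out of the $L^{q}(B)$ norm, and glue the local factors with \cref{lem:holder-inequality-Lie-group} plus the finite-measure factor $|B|^{1/q-1/\sigma}$ --- avoids that false step entirely, and your exponent count $(Q/q-Q/\sigma)+\sum_{k\ \mathrm{local}}Q/q_k=Q/q$ is exactly right. Your closing shortcut is also valid and arguably cleaner: applying the kernel factorization to the full operator gives the global pointwise bound $|\mathcal{I}_{\alpha,m}(\vec f)(z)|\le\prod_{k}I_{\alpha_k}(|f_k|)(z)$, so H\"older at exponents $q_k$ plus the known linear local estimate (the $m=1$ case of \cref{lem:frac-loc-lebesgue}, as in \cite{eroglu2017characterizations}) yields the lemma in a few lines with no $2^{m}$ decomposition; its only cost is that the linear case must be established or cited separately. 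Your parenthetical caution that the splitting radius may need to be $\sim c_0 r$ rather than $2r$ when the quasi-norm constant $c_0>1$ applies equally to the paper's argument and is harmless.
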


\begin{proof}
For arbitrary $x\in\mathbb{G}$, set $B=B(x,r)$ be the $\mathbb{G}$ ball centered at $x$ with radius $r$, and $2B=B(x,2r)$. For each $j$, we decompose $f_{j}=f_{j}^{0}+f_{j}^{\infty}$ with $f_{j}^{0}=f_{j}\dchi_{2B}$.
Then
\begin{align*}
 \prod_{j=1}^{m} f_{j}   &=  \prod_{j=1}^{m} \Big( f_{j}^{0} + f_{j}^{\infty} \Big)
   =   \sum_{\beta_{1},\dots,\beta_{m}\in \{0,\infty\}}  f_{1}^{\beta_{1}} \cdots  f_{m}^{\beta_{m}}  \\
  &=  \prod_{j=1}^{m} f_{j}^{0}  +  \sum_{(\beta_{1},\dots,\beta_{m})\in \ell} f_{1}^{\beta_{1}}  \cdots  f_{m}^{\beta_{m}},
\end{align*}
where $\ell=\{ (\beta_{1},\dots,\beta_{m}): \hbox{there is at least one}~ \beta_{j}\neq 0\}$.
Thus, for arbitrary $y\in B(x,r)$, we  obtain
\begin{align*}
\mathcal{I}_{\alpha,m}(\vec{f})(y) = \mathcal{I}_{\alpha,m} ( f_{1}^{0} ,\ldots, f_{m}^{0} )(y) +  \sum_{(\beta_{1},\dots,\beta_{m})\in \ell} \mathcal{I}_{\alpha,m} ( f_{1}^{\beta_{1}} ,\ldots, f_{m}^{\beta_{m}} ) (y).
\end{align*}
Then,
\begin{align*}
\|\mathcal{I}_{\alpha,m}(\vec{f})\|_{L^{q}(B(x,r))} &\leq \|\mathcal{I}_{\alpha,m} ( f_{1}^{0} ,\ldots, f_{m}^{0} )\|_{L^{q}(B(x,r))}
  + \bigg\|\sum_{(\beta_{1},\dots,\beta_{m})\in \ell} \mathcal{I}_{\alpha,m} ( f_{1}^{\beta_{1}} ,\ldots, f_{m}^{\beta_{m}} ) \bigg\|_{L^{q}(B(x,r))}   \\
&= E_{1}+E_{2}.
\end{align*}

For $E_{1}$, applying  the boundedness of  $\mathcal{I}_{\alpha,m}$ (see \cref{lem:thm-3.1-liu2019multilinear}), we have
\begin{align*}
  E_{1} &= \|\mathcal{I}_{\alpha,m} ( f_{1}^{0} ,\ldots, f_{m}^{0} )\|_{L^{q}(B(x,r))} \le \|\mathcal{I}_{\alpha,m}( f_{1}^{0} ,\ldots, f_{m}^{0} )\|_{L^{q}(\mathbb{G})} \\
  &\le C\prod_{i=1}^{m} \|f_{i}^{0}\|_{L^{p_{i}}(\mathbb{G})}\le C\prod_{i=1}^{m} \|f_{i}\|_{L^{p_{i}}(B(x,2r))}.
\end{align*}

Applying the continuous version of Minkowski's inequality and   doubling condition of Haar measure, it can obtain the following fact
\begin{align}\label{inequ:norm-single-integral}
\|f_{i}\|_{L^{p_{i}}(B(x,2r))} \le C r^{\frac{Q}{q_{i}}} \int_{2r}^{\infty} t^{\alpha_{i}-\frac{Q}{p_{i}}-1} \|f_{i}\|_{L^{p_{i}}(B(x,t))} \mathd t, \qquad i=1,2,\ldots,m.
\end{align}

Therefore, we get
\begin{align*}
  E_{1} &\le  C\prod_{i=1}^{m}   r^{\frac{Q}{q_{i}}} \int_{2r}^{\infty} t^{\alpha_{i}-\frac{Q}{p_{i}}-1} \|f_{i}\|_{L^{p_{i}}(B(x,t))} \mathd t   \\
  &\le  C  r^{\frac{Q}{q}} \prod_{i=1}^{m}   \int_{2r}^{\infty} t^{\alpha_{i}-\frac{Q}{p_{i}}-1} \|f_{i}\|_{L^{p_{i}}(B(x,t))} \mathd t.
\end{align*}

To estimate $E_{2}$, we consider first the case   $\beta_{1}=\beta_{2}=\cdot\cdot\cdot=\beta_{m}=\infty$. When $y\in B(x,r)$ and $z_{i}\in  \sideset{^{\complement}}{}{\mathop {B(x,2r)}}=\mathbb{G}\setminus B(x,2r)~(i=1,2,\ldots,m)$,   the conditions imply $\frac{1}{2}\rho(z_{i}^{-1}y) \leq \rho(z_{i}^{-1}x) \leq \frac{3}{2}\rho(z_{i}^{-1}y)$. Thus, we can obtain
\begin{align*}
 | \mathcal{I}_{\alpha,m} ( f_{1}^{\infty} ,\ldots, f_{m}^{\infty} )(y)| &= \bigg|\dint_{\mathbb{G}^{m}} \dfrac{f_{1}^{\infty}(z_{1}) \cdots f_{m}^{\infty}(z_{m})}{(\rho(z_{1}^{-1}y)+\cdots+\rho(z_{m}^{-1}y))^{mQ-\alpha}}  \mathd  \vec{z} \bigg| \\
  &\le  C  \dint_{(\mathbb{G}\setminus B(x,2r))^{m}} \dfrac{|f_{1}(z_{1}) \cdots f_{m}(z_{m})|}{|\rho(z_{1}^{-1}y)+\cdots+\rho(z_{m}^{-1}y)|^{mQ-\alpha}}  \mathd  \vec{z}  \\
   &\le C \prod_{i=1}^{m}  \dint_{\mathbb{G}\setminus B(x,2r)} \dfrac{|f_{i}(z_{i})|}{\rho(z_{i}^{-1}y)^{Q-\alpha_{i}}}  \mathd z_{i}   \\
   &\le C \prod_{i=1}^{m}  \dint_{\mathbb{G}\setminus B(x,2r)} \dfrac{|f_{i}(z_{i})|}{\rho(z_{i}^{-1}x)^{Q-\alpha_{i}}}  \mathd z_{i}.
\end{align*}
Further, by applying Fubini's theorem, H\"{o}lder's inequality (see \cref{lem:holder-inequality-Lie-group}) and \cref{lem:norm-characteristic-functions-Lie-group}, we have that
\begin{align*}
 \dint_{\mathbb{G}\setminus B(x,2r)} \dfrac{|f_{i}(z_{i})|}{\rho(z_{i}^{-1}x)^{Q-\alpha_{i}}}  \mathd z_{i} &= \dint_{\mathbb{G}\setminus B(x,2r)} |f_{i}(z_{i})| \rho(z_{i}^{-1}x)^{\alpha_{i}-Q}  \mathd z_{i} \\
 &\le C\dint_{\mathbb{G}\setminus B(x,2r)} |f_{i}(z_{i})| \bigg(\dint_{\rho(z_{i}^{-1}x)}^{\infty} t^{\alpha_{i}-Q-1} \mathd t\bigg) \mathd z_{i} \\
 &\approx C \dint_{2r}^{\infty} \bigg( \dint_{ 2r<\rho(z_{i}^{-1}x)<t} |f_{i}(z_{i})| \mathd z_{i}\bigg)   t^{\alpha_{i}-Q-1} \mathd t  \\
  &\le  C \dint_{2r}^{\infty} \bigg( \dint_{B(x,t)} |f_{i}(z_{i})| \mathd z_{i}\bigg)   t^{\alpha_{i}-Q-1} \mathd t  \\
  &\le  C \dint_{2r}^{\infty}    t^{\alpha_{i}-Q-1} \|f_{i}\|_{L^{p_{i}}(B(x,t))}  \|\dchi_{B(x,t)}\|_{L^{p'_{i}}(B(x,t))}  \mathd t  \\
  &\le  C \dint_{2r}^{\infty}    t^{\alpha_{i}-\frac{Q}{p_{i}}-1} \|f_{i}\|_{L^{p_{i}}(B(x,t))}   \mathd t.
\end{align*}

 Consequently,  taking into account the obtained estimate above, we conclude that
\begin{align*}
  E_{2\infty} &= \| \mathcal{I}_{\alpha,m} ( f_{1}^{\infty} ,\ldots, f_{m}^{\infty} ) \|_{L^{q}(B(x,r))} =  \bigg( \dint_{B(x,r)} | \mathcal{I}_{\alpha,m} ( f_{1}^{\infty} ,\ldots, f_{m}^{\infty} )(y)|^{q}   \mathd y \bigg)^{1/q}  \\
  &\le  C  r^{\frac{Q}{q}} \prod_{i=1}^{m}   \int_{2r}^{\infty} t^{\alpha_{i}-\frac{Q}{p_{i}}-1} \|f_{i}\|_{L^{p_{i}}(B(x,t))} \mathd t.
\end{align*}

 Now, for $(\beta_{1},\dots,\beta_{m})\in \ell$, let us consider the terms $ E_{2(\beta_{1},\dots,\beta_{m})}$ such that at least one
 $\beta_{i}=0$ and  one  $\beta_{j}=\infty$. Without loss of generality, we assume that  $ \beta_{1}=\cdots=\beta_{k}=0$ and $\beta_{k+1}=\cdots=\beta_{m}=\infty$ with $1\le k<m$. It is easy to  check  that $ \rho(z_{i}^{-1}x) \approx\rho(z_{i}^{-1}y)$ since $y\in B(x,r)$ and $z_{i}\in  \sideset{^{\complement}}{}{\mathop {B(x,2r)}}=\mathbb{G}\setminus B(x,2r)~(i=1,2,\ldots,m)$. Thus,  we have
\begin{align*}
 &| \mathcal{I}_{\alpha,m} ( f_{1}^{0},\ldots,f_{k}^{0}, f_{k+1}^{\infty},\ldots, f_{m}^{\infty} )(y)| = \bigg|\dint_{\mathbb{G}^{m}} \dfrac{f_{1}^{0}(z_{1}) \cdots f_{k}^{0}(z_{k})   f_{k+1}^{\infty}(z_{k+1}) \cdots f_{m}^{\infty}(z_{m})}{(\rho(z_{1}^{-1}y)+\cdots+\rho(z_{m}^{-1}y))^{mQ-\alpha}}  \mathd  \vec{z} \bigg| \\
  &\qquad \le  C  \dint_{\mathbb{G}^{m}} \dfrac{|f_{1}^{0}(z_{1}) \cdots f_{k}^{0}(z_{k})   f_{k+1}^{\infty}(z_{k+1}) \cdots f_{m}^{\infty}(z_{m})|}{|\rho(z_{1}^{-1}y)+\cdots+\rho(z_{m}^{-1}y)|^{mQ-\alpha}}  \mathd  \vec{z}  \\
   &\qquad \le C\bigg( \prod_{i=1}^{k} \dint_{B(x,2r)} \dfrac{|f_{i}(z_{i})|}{\rho(z_{i}^{-1}y)^{Q-\alpha_{i}}}  \mathd z_{i}  \bigg)\bigg( \prod_{j=k+1}^{m} \dint_{\mathbb{G}\setminus B(x,2r)} \dfrac{|f_{i}(z_{j})|}{\rho(z_{j}^{-1}y)^{Q-\alpha_{j}}}  \mathd z_{j}  \bigg)  \\
   &\qquad \le C\bigg( \prod_{i=1}^{k} \dint_{B(x,2r)} \dfrac{|f_{i}(z_{i})|}{\rho(z_{i}^{-1}x)^{Q-\alpha_{i}}}  \mathd z_{i}  \bigg)\bigg( \prod_{j=k+1}^{m} \dint_{\mathbb{G}\setminus B(x,2r)} \dfrac{|f_{i}(z_{j})|}{\rho(z_{j}^{-1}x)^{Q-\alpha_{j}}}  \mathd z_{j}  \bigg).
\end{align*}

 Similar to the estimate above, using Fubini's theorem, H\"{o}lder's inequality (see \cref{lem:holder-inequality-Lie-group}), \cref{lem:norm-characteristic-functions-Lie-group} and \labelcref{inequ:norm-single-integral}, we get
\begin{align*}
  E_{2(\beta_{1},\dots,\beta_{m})} &= \| \mathcal{I}_{\alpha,m} ( f_{1}^{0},\ldots,f_{k}^{0}, f_{k+1}^{\infty},\ldots, f_{m}^{\infty} ) \|_{L^{q}(B(x,r))} \\
  & =  \bigg( \dint_{B(x,r)} | \mathcal{I}_{\alpha,m} ( f_{1}^{0},\ldots,f_{k}^{0}, f_{k+1}^{\infty},\ldots, f_{m}^{\infty} )(y)|^{q}   \mathd y \bigg)^{1/q}  \\
   &\le C r^{\frac{Q}{q}}\bigg( \prod_{i=1}^{k} \dint_{B(x,2r)} \dfrac{|f_{i}(z_{i})|}{\rho(z_{i}^{-1}x)^{Q-\alpha_{i}}}  \mathd z_{i}  \bigg)\bigg( \prod_{j=k+1}^{m} \dint_{\mathbb{G}\setminus B(x,2r)} \dfrac{|f_{i}(z_{j})|}{\rho(z_{j}^{-1}x)^{Q-\alpha_{j}}}  \mathd z_{j}  \bigg)  \\
  &\le  C  r^{\frac{Q}{q}} \prod_{i=1}^{m}   \int_{2r}^{\infty} t^{\alpha_{i}-\frac{Q}{p_{i}}-1} \|f_{i}\|_{L^{p_{i}}(B(x,t))} \mathd t.
\end{align*}

Combining the above estimates we get the desired result. The proof is completed.

\end{proof}

\begin{refproof}[Proof of \cref{thm: frac-int-op-main-1}]
Let  $1< p_{i}<\infty~(i=1,2,\ldots,m)$ and  $\vec{f}=(f_{1},f_{2},\dots,f_{m}) \in \mathcal{L}^{p_{1},\varphi_{1}}(\mathbb{G})\times \cdots \times  \mathcal{L}^{p_{m},\varphi_{m}}(\mathbb{G})$. According to the assumption \labelcref{condition:Zygmund-type-integral-inequality}, and using \cref{lem:frac-loc-lebesgue} and \cref{lem:thm-3.1-guliyev2013generalized}  with $w(r)=r^{-Q/q_{i}-1}$, $v_{2}(r)= \psi(x,r)^{-1/m}$ and $v_{1}(r)= \varphi_{i}(x,r)^{-1} r^{-Q/p_{i}} ~(i=1,2,\ldots,m)$, we have
\begin{align*}
  \|\mathcal{I}_{\alpha,m}(\vec{f})\|_{\mathcal{L}^{q,\psi}(\mathbb{G})}  &= \sup_{x \in \mathbb{G} \atop r>0 } \dfrac{1}{\psi(x,r)} \Big( \frac{1}{\vert B(x,r)\vert } \dint_{B(x,r)} \vert \mathcal{I}_{\alpha,m}(\vec{f})(y)\vert ^{q} \mathd y \Big)^{1/q}   \\
   &\le C\sup_{x \in \mathbb{G} \atop r>0 }   \prod_{i=1}^{m} \psi(x,r)^{-1/m}\dint_{r}^{\infty} t^{-\frac{Q}{q_{i}}-1} \|f_{i}\|_{L^{p_{i}}(B(x,t))}\mathd t   \\
   &\le C\sup_{x \in \mathbb{G} \atop r>0 }   \prod_{i=1}^{m} \varphi_{i}(x,r)^{-1} r^{-Q/p_{i}}  \|f_{i}\|_{L^{p_{i}}(B(x,r))}  \\
  &\le C   \prod_{i=1}^{m} \|f_{i}\|_{\mathcal{L}^{p_{i},\varphi_{i}}(\mathbb{G})}
\end{align*}

 This completes the proof of \cref{thm: frac-int-op-main-1}.
\end{refproof}

\subsection{Proof of \cref{thm: frac-max-op-main-2}}

In order to prove \cref{thm: frac-max-op-main-2}, we also need the follow auxiliary results.

Similar to the  pointwise relation between fraction  integral operator and fractional maximal operator, by elementary calculations, we can obtain the  following   lemma, and   omit the proof.
\begin{lemma}    \label{lem:multi-pointwise-relation}
Suppose that  $m\in \mathbb{Z}^{+}$,  $0<\alpha_{i} <Q $,  $1< p_{i}<Q/\alpha_{i}~(i=1,2,\ldots,m)$ and $\alpha=\sum\limits_{i=1}^{m}\alpha_{i}$. Let $ \vec{f} \in L^{p_{1}}(\mathbb{G})\times \cdots \times  L^{p_{m}}(\mathbb{G})$, then there exists a positive
constant $C$ such that  the pointwise inequality
\begin{align*}
 \mathcal{M}_{\alpha,m}(\vec{f})(x) \le C \mathcal{I}_{\alpha,m}(|f_{1}|,\ldots,|f_{m}|)(x)
\end{align*}
  holds for any $x\in\mathbb{G}$.  
\end{lemma}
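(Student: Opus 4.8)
The plan is to produce the inequality by comparing, for each fixed ball, the averaging quantity in the definition of $\mathcal{M}_{\alpha,m}$ with a suitably restricted piece of the integral defining $\mathcal{I}_{\alpha,m}$. Fix $x\in\mathbb{G}$ and let $B=B(x_{0},r)$ be an arbitrary $\mathbb{G}$-ball containing $x$ with radius $r>0$. The first step is to bound the kernel from below on the $m$-fold product $B^{m}$. Since $x\in B$ we have $\rho(x_{0}^{-1}x)=\rho(x^{-1}x_{0})<r$, and for each $i$ and each $y_{i}\in B$ we have $\rho(y_{i}^{-1}x_{0})<r$; hence the quasi-triangle inequality $\rho(uv)\le c_{0}(\rho(u)+\rho(v))$ applied to $y_{i}^{-1}x=(y_{i}^{-1}x_{0})(x_{0}^{-1}x)$ gives $\rho(y_{i}^{-1}x)<2c_{0}r$. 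Summing over $i$ and using $mQ-\alpha>0$ (which holds since $\alpha<mQ$), I obtain
\[
\big(\rho(y_{1}^{-1}x)+\cdots+\rho(y_{m}^{-1}x)\big)^{mQ-\alpha}\le (2mc_{0}r)^{mQ-\alpha},
\]
so that on $B^{m}$ the kernel is bounded below by $C\,r^{\alpha-mQ}$.

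Next I would restrict the integral over $\mathbb{G}^{m}$ to $B^{m}$ and insert the lower bound just derived:
\[
\mathcal{I}_{\alpha,m}(|f_{1}|,\ldots,|f_{m}|)(x)\ge \dint_{B^{m}} \dfrac{|f_{1}(y_{1})|\cdots|f_{m}(y_{m})|}{(\rho(y_{1}^{-1}x)+\cdots+\rho(y_{m}^{-1}x))^{mQ-\alpha}}\,\mathd\vec{y}\ge \dfrac{C}{r^{mQ-\alpha}}\dint_{B^{m}}\prod_{i=1}^{m}|f_{i}(y_{i})|\,\mathd\vec{y}.
\]
By Tonelli's theorem the integral over $B^{m}$ factorizes as $\prod_{i=1}^{m}\dint_{B}|f_{i}(y_{i})|\,\mathd y_{i}$. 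Converting the radius to the Haar measure via $|B|=c_{1}r^{Q}$, which gives $r^{mQ-\alpha}=c_{1}^{\alpha/Q-m}|B|^{m-\alpha/Q}$, I rewrite the right-hand side as
\[
C\,|B|^{\alpha/Q}\prod_{i=1}^{m}\dfrac{1}{|B|}\dint_{B}|f_{i}(y_{i})|\,\mathd y_{i},
\]
which is exactly a constant multiple of the quantity whose supremum defines $\mathcal{M}_{\alpha,m}(\vec{f})(x)$. Taking the supremum over all $\mathbb{G}$-balls $B$ containing $x$ then yields $\mathcal{M}_{\alpha,m}(\vec{f})(x)\le C\,\mathcal{I}_{\alpha,m}(|f_{1}|,\ldots,|f_{m}|)(x)$.

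The only genuinely delicate point is the first step: because the supremum in $\mathcal{M}_{\alpha,m}$ runs over balls merely \emph{containing} $x$ rather than centered at $x$, the comparison $\rho(y_{i}^{-1}x)\lesssim r$ is not immediate and must pass through the quasi-triangle inequality, with the resulting constant $c_{0}$ and the factor $m$ from summing the $m$ norms absorbed into the final $C$. Everything else is bookkeeping: the factorization of the product integral and the algebraic conversion between $r^{mQ-\alpha}$ and powers of $|B|$. I would also remark that the hypotheses on the $p_{i}$ and $q$ play no role in this pointwise estimate itself—they are inherited from the ambient setting—so the argument in fact needs only $0\le\alpha<mQ$.
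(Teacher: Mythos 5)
Your proof is correct: the paper omits this argument entirely (``by elementary calculations\ldots we omit the proof''), and what you give is precisely the standard kernel lower-bound argument it alludes to --- restrict the integral defining $\mathcal{I}_{\alpha,m}(|f_{1}|,\ldots,|f_{m}|)(x)$ to $B^{m}$, bound the kernel below by $C r^{\alpha-mQ}$ there, factor the product integral by Tonelli, and convert $r^{mQ-\alpha}$ into $|B|^{m-\alpha/Q}$ via $|B|=c_{1}r^{Q}$ before taking the supremum. The one genuinely delicate point, namely that the supremum in $\mathcal{M}_{\alpha,m}$ runs over balls merely containing $x$, is handled correctly by your use of the quasi-triangle inequality $\rho(y_{i}^{-1}x)\le c_{0}\bigl(\rho(y_{i}^{-1}x_{0})+\rho(x_{0}^{-1}x)\bigr)<2c_{0}r$, and your observation that only $0\le\alpha<mQ$ is actually needed is also accurate.
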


According to \cref{lem:thm-3.1-liu2019multilinear} and \cref{lem:multi-pointwise-relation},  the following result  can be  obtained, and we omit the proof.
\begin{lemma}    \label{lem:frac-max}
Suppose that  $m\in \mathbb{Z}^{+}$,  $0<\alpha_{i} <Q $,  $1< p_{i}<Q/\alpha_{i}~(i=1,2,\ldots,m)$ and $\alpha=\sum\limits_{i=1}^{m}\alpha_{i}$. Let  $q$ satisfy
\begin{align*}
  \frac{1}{q}=\frac{1}{p_{1}}+\cdots+\frac{1}{p_{m}}-\frac{\alpha}{Q}<1.
\end{align*}
Then the operator $\mathcal{M}_{\alpha,m}$ is bounded from product space  $ L^{p_{1}}(\mathbb{G})\times \cdots \times  L^{p_{m}}(\mathbb{G})$ to $ L^{q}(\mathbb{G})$, namely
\begin{align*}
  \|\mathcal{M}_{\alpha,m}(\vec{f})\|_{L^{q}(\mathbb{G})}\le C\prod_{i=1}^{m} \|f_{i}\|_{L^{p_{i}}(\mathbb{G})}.
\end{align*}
\end{lemma}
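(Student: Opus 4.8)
The plan is to derive the boundedness of $\mathcal{M}_{\alpha,m}$ directly from the already-established boundedness of the multilinear fractional integral $\mathcal{I}_{\alpha,m}$ (\cref{lem:thm-3.1-liu2019multilinear}), using the pointwise domination recorded in \cref{lem:multi-pointwise-relation}. The whole argument is a short reduction: control the maximal operator pointwise by the integral operator, and then invoke the known mapping property of the latter. No new estimate on $\mathcal{M}_{\alpha,m}$ itself is needed.

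First I would fix $\vec{f}=(f_{1},\ldots,f_{m})\in L^{p_{1}}(\mathbb{G})\times\cdots\times L^{p_{m}}(\mathbb{G})$ and start from the pointwise inequality
\begin{align*}
\mathcal{M}_{\alpha,m}(\vec{f})(x)\le C\,\mathcal{I}_{\alpha,m}(|f_{1}|,\ldots,|f_{m}|)(x),\qquad x\in\mathbb{G},
\end{align*}
supplied by \cref{lem:multi-pointwise-relation}. Taking the $L^{q}(\mathbb{G})$ norm of both sides and using the monotonicity of the norm with respect to pointwise nonnegative domination gives
\begin{align*}
\|\mathcal{M}_{\alpha,m}(\vec{f})\|_{L^{q}(\mathbb{G})}\le C\,\|\mathcal{I}_{\alpha,m}(|f_{1}|,\ldots,|f_{m}|)\|_{L^{q}(\mathbb{G})}.
\end{align*}
Because the hypotheses on $m$, $\alpha_{i}$, $p_{i}$ and $q$ in the present lemma coincide verbatim with those of \cref{lem:thm-3.1-liu2019multilinear}, that lemma applies to the tuple $(|f_{1}|,\ldots,|f_{m}|)$ and yields
\begin{align*}
\|\mathcal{I}_{\alpha,m}(|f_{1}|,\ldots,|f_{m}|)\|_{L^{q}(\mathbb{G})}\le C\prod_{i=1}^{m}\||f_{i}|\|_{L^{p_{i}}(\mathbb{G})}.
\end{align*}
Finally, since $\||f_{i}|\|_{L^{p_{i}}(\mathbb{G})}=\|f_{i}\|_{L^{p_{i}}(\mathbb{G})}$, chaining the three displays produces exactly $\|\mathcal{M}_{\alpha,m}(\vec{f})\|_{L^{q}(\mathbb{G})}\le C\prod_{i=1}^{m}\|f_{i}\|_{L^{p_{i}}(\mathbb{G})}$, which is the asserted claim.

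There is essentially no analytic obstacle here, which is why the proof is omitted in the text: the substance of the statement is contained entirely in the pointwise comparison of \cref{lem:multi-pointwise-relation} (whose proof mimics the classical $m=1$ relation $M_{\alpha}f\lesssim I_{\alpha}(|f|)$, bounding the average of $|f_{i}|$ over each ball $B\ni x$ by the corresponding kernel integral) together with the strong-type estimate for $\mathcal{I}_{\alpha,m}$. The only items worth verifying are that the index relations in both auxiliary lemmas match and that replacing $f_{i}$ by $|f_{i}|$ leaves the $L^{p_{i}}$ norms unchanged; both are immediate, so the argument above is complete.
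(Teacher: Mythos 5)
Your proof is correct and is exactly the argument the paper intends: the text states that the lemma ``can be obtained'' from \cref{lem:thm-3.1-liu2019multilinear} and \cref{lem:multi-pointwise-relation} and omits the details, and your chain --- pointwise domination $\mathcal{M}_{\alpha,m}(\vec{f})\le C\,\mathcal{I}_{\alpha,m}(|f_{1}|,\ldots,|f_{m}|)$, monotonicity of the $L^{q}$ norm, then the strong-type bound for $\mathcal{I}_{\alpha,m}$ applied to $(|f_{1}|,\ldots,|f_{m}|)$ --- fills in precisely those details. Nothing is missing; the observation that $\||f_{i}|\|_{L^{p_{i}}(\mathbb{G})}=\|f_{i}\|_{L^{p_{i}}(\mathbb{G})}$ completes the reduction.
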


In addition, the following local estimates are true.  
Specifically, when m=1, the result can be found in \cite{guliyev2013boundedness} (see Lemma 3.2).
\begin{lemma}    \label{lem:frac-max-loc-lebesgue}
Suppose that  $m\in \mathbb{Z}^{+}$,  $0<\alpha_{i} <Q $,  $1< p_{i}<Q/\alpha_{i}~(i=1,2,\ldots,m)$ and $\alpha=\sum\limits_{i=1}^{m}\alpha_{i}$. Let  $q$ satisfy
\begin{align*}
  \frac{1}{q}=\frac{1}{p_{1}}+\cdots+\frac{1}{p_{m}}-\frac{\alpha}{Q}<1.
\end{align*}
Then  the inequality
\begin{align*}
  \|\mathcal{M}_{\alpha,m}(\vec{f})\|_{L^{q}(B(x,r))}\le C r^{Q/q}\prod_{i=1}^{m} \sup_{t>2r} t^{\alpha_{i}-\frac{Q}{p_{i}}} \|f_{i}\|_{L^{p_{i}}(B(x,t))}
\end{align*}
holds for any ball $B(x,r)$ and for all  $\vec{f}\in L_{\loc}^{p_{1}}(\mathbb{G})\times \cdots \times  L_{\loc}^{p_{m}}(\mathbb{G})$.
\end{lemma}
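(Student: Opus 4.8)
The plan is to mirror the proof of \cref{lem:frac-loc-lebesgue}, exploiting the fact that the maximal operator behaves more simply than the integral operator: the inner Zygmund-type integral in $t$ is replaced by a supremum obtained directly from monotonicity. Fix $x\in\mathbb{G}$, put $B=B(x,r)$ and $2B=B(x,2r)$, and split each $f_j=f_j^{0}+f_j^{\infty}$ with $f_j^{0}=f_j\chi_{2B}$. Since $|f_i|=|f_i^{0}|+|f_i^{\infty}|$ and the supremum of a sum is dominated by the sum of suprema, expanding the product in the definition of $\mathcal{M}_{\alpha,m}$ gives the sublinear-type splitting
\[
\mathcal{M}_{\alpha,m}(\vec f)(y)\le\sum_{(\beta_{1},\dots,\beta_{m})\in\{0,\infty\}^{m}}\mathcal{M}_{\alpha,m}(f_{1}^{\beta_{1}},\dots,f_{m}^{\beta_{m}})(y),
\]
so that, by the triangle inequality in $L^{q}(B)$, it suffices to estimate the $2^{m}$ terms separately.

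For the leading term $(\beta_{1},\dots,\beta_{m})=(0,\dots,0)$ I would use the global boundedness of $\mathcal{M}_{\alpha,m}$ from \cref{lem:frac-max} to pass to $L^{q}(\mathbb{G})$ and then to $\prod_{i}\|f_{i}\|_{L^{p_{i}}(2B)}$, exactly as for $E_{1}$ in \cref{lem:frac-loc-lebesgue}. The only new ingredient is the supremal analogue of \labelcref{inequ:norm-single-integral}: since $\alpha_{i}-Q/p_{i}<0$ and $B(x,2r)\subset B(x,t)$ for $t>2r$, monotonicity immediately yields
\[
\|f_{i}\|_{L^{p_{i}}(B(x,2r))}\le C\,r^{Q/q_{i}}\sup_{t>2r}t^{\alpha_{i}-\frac{Q}{p_{i}}}\|f_{i}\|_{L^{p_{i}}(B(x,t))},
\]
which already has the desired form; multiplying over $i$ and using $\sum_{i}Q/q_{i}=Q/q$ produces the factor $r^{Q/q}$.

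For every remaining multi-index with $(\beta_{1},\dots,\beta_{m})\neq(0,\dots,0)$ I would argue pointwise for $y\in B$. Using the symmetry and quasi-triangle properties of $\rho$ together with the doubling of Haar measure, the supremum defining $\mathcal{M}_{\alpha,m}$ may be taken (up to a fixed constant) over balls $B(y,s)$ centered at $y$. The decisive observation is that, because at least one $\beta_{j}=\infty$ forces $B(y,s)$ to meet $\mathbb{G}\setminus B(x,2r)$, only radii $s\gtrsim r$ contribute, and for such $s$ one has $B(y,s)\subset B(x,Cs)$. Estimating $\int_{B(y,s)}|f_{i}^{\beta_{i}}|\le\int_{B(x,Cs)}|f_{i}|$ and applying \cref{lem:holder-inequality-Lie-group} together with \cref{lem:norm-characteristic-functions-Lie-group} gives $\int_{B(x,Cs)}|f_{i}|\le C s^{Q-\frac{Q}{p_{i}}}\|f_{i}\|_{L^{p_{i}}(B(x,Cs))}$; collecting the powers of $s$ leaves exactly $\prod_{i}s^{\alpha_{i}-Q/p_{i}}$, and after the substitution $t=Cs$ (with $s\gtrsim r$) the supremum of the product is bounded by the product of the suprema, so
\[
\mathcal{M}_{\alpha,m}(f_{1}^{\beta_{1}},\dots,f_{m}^{\beta_{m}})(y)\le C\prod_{i=1}^{m}\sup_{t>2r}t^{\alpha_{i}-\frac{Q}{p_{i}}}\|f_{i}\|_{L^{p_{i}}(B(x,t))},
\]
a bound independent of $y$; integrating this constant bound over $B(x,r)$ contributes $|B(x,r)|^{1/q}=Cr^{Q/q}$. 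Summing the finitely many contributions gives the claim.

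The main obstacle is precisely the geometric step for the terms carrying an $\infty$-component: one must show carefully, using only the quasi-triangle inequality with constant $c_{0}$ and the doubling property, that any ball $B(y,s)$ charging mass outside $B(x,2r)$ satisfies both $s\gtrsim r$ and $B(y,s)\subset B(x,Cs)$, so that the exponent bookkeeping closes to give $s^{\alpha_{i}-Q/p_{i}}$ in each factor. Once this containment-and-radius estimate is secured, everything else is a routine application of H\"{o}lder's inequality and monotonicity, in complete parallel with \cref{lem:frac-loc-lebesgue}.
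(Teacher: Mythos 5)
Your proposal is correct, and its skeleton coincides with the paper's: the same splitting $f_j=f_j^{0}+f_j^{\infty}$ with $f_j^{0}=f_j\chi_{2B}$, the same expansion into $2^{m}$ terms, the all-zero term handled by the global boundedness of \cref{lem:frac-max} followed by the supremal monotonicity bound $\|f_{i}\|_{L^{p_{i}}(B(x,2r))}\le Cr^{Q/q_{i}}\sup_{t>2r}t^{\alpha_{i}-Q/p_{i}}\|f_{i}\|_{L^{p_{i}}(B(x,t))}$ (which the paper uses silently), and the same geometric containment argument for far parts. Where you genuinely diverge is in the cross terms, and your route is cleaner. The paper applies the pointwise geometric argument only to the all-$\infty$ term; for mixed indices it factorizes $\mathcal{M}_{\alpha,m}(f_{1}^{0},\ldots,f_{k}^{0},f_{k+1}^{\infty},\ldots,f_{m}^{\infty})$ into a product of two maximal functions, bounds the local factors by the one-linear operators $M_{\alpha_{i}}(f_{i}^{0})$, invokes \cref{lem:frac-max} with $m=1$ for those, and recombines with H\"older's inequality over $B(x,r)$. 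You instead observe that one $\beta_{j}=\infty$ already forces every contributing ball $B(y,s)$ to meet $\mathbb{G}\setminus B(x,2r)$, hence $s\gtrsim r$ and $B(y,s)\subset B(x,Cs)$, and then estimate \emph{every} component, local or far, by $\int_{B(y,s)}|f_{i}^{\beta_{i}}|\le\int_{B(x,Cs)}|f_{i}|\le Cs^{Q-Q/p_{i}}\|f_{i}\|_{L^{p_{i}}(B(x,Cs))}$; the exponents assemble to $\prod_{i}s^{\alpha_{i}-Q/p_{i}}$, the substitution $t=Cs$ costs only a constant, and the $y$-independent bound integrates over $B(x,r)$ to give $r^{Q/q}$. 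This buys two things: the Lebesgue-space boundedness is needed only once (for the all-zero term), and you bypass the paper's factorization step, which is in fact written there as an equality although a supremum of a product is only bounded by the product of the suprema; you are also more explicit than the paper in reducing to centered balls. One caveat, shared equally with the paper: the lower bound $s\gtrsim r$ (like the paper's use of $\rho(z_{i}^{-1}y)\ge\rho(z_{i}^{-1}x)-\rho(y^{-1}x)$) relies on the reverse triangle inequality, which requires the homogeneous norm to satisfy the genuine triangle inequality, or at least $c_{0}<2$; for a general quasi-norm one should either choose such a norm or enlarge the cutoff $2B$ to $\lambda B$ with $\lambda=\lambda(c_{0})$. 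Since the paper tacitly assumes $c_{0}=1$ at exactly this point, this is not a gap in your argument relative to the paper's own standard of rigor.
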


\begin{proof}
Similar to the proof of \cref{lem:frac-loc-lebesgue},
for each $j$, we decompose $f_{j}=f_{j}^{0}+f_{j}^{\infty}$ with $f_{j}^{0}=f_{j}\dchi_{2B}$, and
\begin{align*}
 \prod_{j=1}^{m} f_{j}   
  &=  \prod_{j=1}^{m} f_{j}^{0}  +  \sum_{(\beta_{1},\dots,\beta_{m})\in \ell} f_{1}^{\beta_{1}}  \cdots  f_{m}^{\beta_{m}},
\end{align*}
where $\ell=\{ (\beta_{1},\dots,\beta_{m}): \hbox{there is at least one}~ \beta_{j}\neq 0\}$.
Thus, for arbitrary $y\in B(x,r)$, we  obtain
\begin{align*}
\mathcal{M}_{\alpha,m}(\vec{f})(y) = \mathcal{M}_{\alpha,m} ( f_{1}^{0} ,\ldots, f_{m}^{0} )(y) +  \sum_{(\beta_{1},\dots,\beta_{m})\in \ell} \mathcal{M}_{\alpha,m} ( f_{1}^{\beta_{1}} ,\ldots, f_{m}^{\beta_{m}} ) (y).
\end{align*}
Then,
\begin{align*}
\|\mathcal{M}_{\alpha,m}(\vec{f})\|_{L^{q}(B(x,r))} &\leq \|\mathcal{M}_{\alpha,m} ( f_{1}^{0} ,\ldots, f_{m}^{0} )\|_{L^{q}(B(x,r))}
  + \bigg\|\sum_{(\beta_{1},\dots,\beta_{m})\in \ell} \mathcal{M}_{\alpha,m} ( f_{1}^{\beta_{1}} ,\ldots, f_{m}^{\beta_{m}} ) \bigg\|_{L^{q}(B(x,r))}   \\
&= E_{1}+E_{2}.
\end{align*}


For $E_{1}$, applying  the boundedness of  $\mathcal{M}_{\alpha,m}$ (see \cref{lem:frac-max}) and the     doubling condition of Haar measure, we have
\begin{align*}
  E_{1} &= \|\mathcal{M}_{\alpha,m} ( f_{1}^{0} ,\ldots, f_{m}^{0} )\|_{L^{q}(B(x,r))} \le \|\mathcal{M}_{\alpha,m}( f_{1}^{0} ,\ldots, f_{m}^{0} )\|_{L^{q}(\mathbb{G})} \\
  &\le C\prod_{i=1}^{m} \|f_{i}^{0}\|_{L^{p_{i}}(\mathbb{G})}\le C\prod_{i=1}^{m} \|f_{i}\|_{L^{p_{i}}(B(x,2r))}  \\
  &\le  C  r^{\frac{Q}{q}} \prod_{i=1}^{m}   \sup_{t>2r} t^{\alpha_{i}-\frac{Q}{p_{i}}} \|f_{i}\|_{L^{p_{i}}(B(x,t))} .
\end{align*}

To estimate $E_{2}$, we first consider the case   $\beta_{1}=\beta_{2}=\cdot\cdot\cdot=\beta_{m}=\infty$.
Let $y$ be an arbitrary point from $B(x,r)$. If  $B(y,t) \bigcap \sideset{^{\complement}}{}{\mathop {B(x,2r)}}\neq \emptyset$, then $t>r$. In fact, when $z_{i}\in  B(y,t) \bigcap \sideset{^{\complement}}{}{\mathop {B(x,2r)}}~(i=1,2,\ldots,m)$, we have
\begin{align*}
 t> \rho(z_{i}^{-1}y) \ge \rho(z_{i}^{-1}x)- \rho(y^{-1}x)>2r-r=r.
\end{align*}
On the other hand,  $B(y,t) \bigcap \sideset{^{\complement}}{}{\mathop {B(x,2r)}} \subset B(x,2t)  $. Indeed, when $z_{i}\in  B(y,t) \bigcap \sideset{^{\complement}}{}{\mathop {B(x,2r)}}~(i=1,2,\ldots,m)$, we get
\begin{align*}
   \rho(z_{i}^{-1}x)\le  \rho(z_{i}^{-1}y)  + \rho(y^{-1}x)<t+r<2t.
\end{align*}
Then, for all $y\in B(x,r)$ and any $z_{i}\in  B(y,t) \bigcap \sideset{^{\complement}}{}{\mathop {B(x,2r)}}~(i=1,2,\ldots,m)$, using H\"{o}lder's inequality (see \cref{lem:holder-inequality-Lie-group})  and \cref{lem:norm-characteristic-functions-Lie-group},   we have
\begin{align*}
  \mathcal{M}_{\alpha,m} ( f_{1}^{\infty} ,\ldots, f_{m}^{\infty} )(y) &= \sup_{B(x,r)\ni y \atop t>0}|B(y,t)|^{\frac{\alpha}{Q}} \prod_{i=1}^{m} \dfrac{1}{|B(y,t)|} \dint_{B(y,t)}  |f_{i}^{\infty}(z_{i}) |    \mathd  z_{i}  \\
  &= \sup_{B(x,r)\ni y \atop t>0}|B(y,t)|^{\frac{\alpha}{Q}} \prod_{i=1}^{m} \dfrac{1}{|B(y,t)|} \dint_{B(y,t)\bigcap \sideset{^{\complement}}{}{\mathop {B(x,2r)}}}  |f_{i}(z_{i}) |    \mathd  z_{i}  \\
  &\le  C  \sup_{ t>r}|B(x,2t)|^{\frac{\alpha}{Q}} \prod_{i=1}^{m} \dfrac{1}{|B(x,2t)|} \dint_{B(x,2t) }  |f_{i}(z_{i}) |    \mathd  z_{i}  \\
   &\le C  \sup_{  t>2r}|B(x,t)|^{\frac{\alpha}{Q}} \prod_{i=1}^{m} \dfrac{1}{|B(x,t)|} \dint_{B(x,t) }  |f_{i}(z_{i}) |    \mathd  z_{i}    \\
   &\le C  \sup_{  t>2r}|B(x,t)|^{\frac{\alpha}{Q}} \prod_{i=1}^{m} \dfrac{1}{|B(x,t)|}  \|f_{i}\|_{L^{p_{i}}(B(x,t))} \|\dchi_{B(x,t)}\|_{L^{p'_{i}}(B(x,t))}    \\
   &\le C  \sup_{  t>2r}|B(x,t)|^{\frac{\alpha}{Q}} \prod_{i=1}^{m}  |B(x,t)|^{-1/p_{i}}   \|f_{i}\|_{L^{p_{i}}(B(x,t))}      \\
   &\le C \prod_{i=1}^{m} \sup_{  t>2r}     t^{\alpha_{i}-\frac{Q}{p_{i}}} \|f_{i}\|_{L^{p_{i}}(B(x,t))} .
\end{align*}

 Therefore,   we conclude that
\begin{align*}
  E_{2\infty} &= \| \mathcal{M}_{\alpha,m} ( f_{1}^{\infty} ,\ldots, f_{m}^{\infty} ) \|_{L^{q}(B(x,r))} =  \bigg( \dint_{B(x,r)} | \mathcal{M}_{\alpha,m} ( f_{1}^{\infty} ,\ldots, f_{m}^{\infty} )(y)|^{q}   \mathd y \bigg)^{1/q}  \\
  &\le  C  r^{\frac{Q}{q}} \prod_{i=1}^{m} \sup_{  t>2r} t^{\alpha_{i}-\frac{Q}{p_{i}}} \|f_{i}\|_{L^{p_{i}}(B(x,t))}.
\end{align*}

 Now, for $(\beta_{1},\dots,\beta_{m})\in \ell$, let us consider the terms $ E_{2(\beta_{1},\dots,\beta_{m})}$ such that at least one
 $\beta_{i}=0$ and  one  $\beta_{j}=\infty$. Without loss of generality, we assume that  $ \beta_{1}=\cdots=\beta_{k}=0$ and $\beta_{k+1}=\cdots=\beta_{m}=\infty$ with $1\le k<m$.
Then, for all $y\in B(x,r)$, using H\"{o}lder's inequality (see \cref{lem:holder-inequality-Lie-group})  and \cref{lem:norm-characteristic-functions-Lie-group},  we obtain that
\begin{align*}
  \mathcal{M}_{\alpha,m} ( f_{1}^{0},\ldots,f_{k}^{0}, f_{k+1}^{\infty},\ldots, f_{m}^{\infty} )(y)
 &= \mathcal{M}_{\alpha,k} ( f_{1}^{0},\ldots,f_{k}^{0} )(y)  \mathcal{M}_{\alpha,m-k} (  f_{k+1}^{\infty},\ldots, f_{m}^{\infty} )(y)   \\
 &\le C \bigg( \prod_{i=1}^{k} M_{\alpha} ( f_{i}^{0})(y)  \bigg)\bigg( \prod_{j=k+1}^{m} \sup_{  t>2r}     t^{\alpha_{j}-\frac{Q}{p_{j}}} \|f_{j}\|_{L^{p_{j}}(B(x,t))} \bigg).
\end{align*}

 Similar to the estimates $E_{1}$ and $E_{2\infty}$,   using \cref{lem:frac-max} with $m=1$, the     doubling condition of Haar measure, H\"{o}lder's inequality (see \cref{lem:holder-inequality-Lie-group})  and \cref{lem:norm-characteristic-functions-Lie-group}, we get
\begin{align*}
  E_{2(\beta_{1},\dots,\beta_{m})} &= \| \mathcal{M}_{\alpha,m} ( f_{1}^{0},\ldots,f_{k}^{0}, f_{k+1}^{\infty},\ldots, f_{m}^{\infty} ) \|_{L^{q}(B(x,r))} \\
  &=  \bigg( \dint_{B(x,r)} | \mathcal{M}_{\alpha,m} ( f_{1}^{0},\ldots,f_{k}^{0}, f_{k+1}^{\infty},\ldots, f_{m}^{\infty} )(y)|^{q}   \mathd y \bigg)^{1/q}  \\
   &\le C r^{\frac{Q}{q}}\bigg( \prod_{i=1}^{k} \sup_{t>2r} t^{\alpha_{i}-\frac{Q}{p_{i}}} \|f_{i}\|_{L^{p_{i}}(B(x,t))}   \bigg)\bigg( \prod_{j=k+1}^{m} \sup_{  t>2r}     t^{\alpha_{j}-\frac{Q}{p_{j}}} \|f_{j}\|_{L^{p_{j}}(B(x,t))}   \bigg)  \\
  &\le  C  r^{\frac{Q}{q}} \prod_{i=1}^{m}  \sup_{t>2r} t^{\alpha_{i}-\frac{Q}{p_{i}}} \|f_{i}\|_{L^{p_{i}}(B(x,t))} .
\end{align*}

Combining the above estimates we get the desired result. The proof is completed.

\end{proof}

The following supremal type inequality plays a key role in the proof of \cref{thm: frac-max-op-main-2}, which can be founded in \cite{burenkov2010boundedness}(see Theorem 5.4 or Theorem 3.1 in \cite{guliyev2013boundedness}).
\begin{lemma}    \label{lem:supremal-type-inequality-thm3.1-guliyev2013boundedness}
Let $v_{1}$ and $v_{2}$   be non-negative measurable functions satisfying $ 0<\|v_{1} \|_{L^{\infty}(t,\infty)}<\infty$, $ 0<\|v_{2} \|_{L^{\infty}(0,t)}<\infty$  for any $t \in (0,\infty)$. And let $u$ be a continuous non-negative function on $(0,\infty)$. Then the supremal operator $\overline{S}_{u}$ is bounded from $L_{v_{1}}^{\infty}(0,\infty)$ to $L_{v_{2}}^{\infty}(0,\infty)$ on the cone $\mathbb{A}$ if and only if
\begin{align*}
  \left\|v_{2} \overline{S}_{u} \Big(\|v_{1} \|_{L^{\infty}(\cdot,\infty)}^{-1} \Big) \right\|_{L^{\infty}(0,\infty)} <\infty.
\end{align*}
\end{lemma}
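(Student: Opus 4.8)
The plan is to reduce the equivalence to elementary estimates for the non-decreasing function $w(s) := \|v_1\|_{L^\infty(s,\infty)}^{-1}$, for which the stated condition reads $B := \|v_2\,\overline{S}_u(w)\|_{L^\infty(0,\infty)} < \infty$. First I would record two structural facts: the map $s\mapsto\|v_1\|_{L^\infty(s,\infty)}$ is non-increasing, so $w$ is non-decreasing on $(0,\infty)$; and $v_1(s)\le\|v_1\|_{L^\infty(s,\infty)}$ for a.e.\ $s$ (a standard consequence of the definition of the essential supremum together with $\lim_{\delta\to0^+}\|v_1\|_{L^\infty(s-\delta,\infty)}=\|v_1\|_{L^\infty(s,\infty)}$), whence $v_1(s)w(s)\le1$ a.e.

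For the sufficiency ($B<\infty\Rightarrow$ boundedness), I would take $g\in\mathbb{A}$ with $\|g\|_{L_{v_1}^\infty(0,\infty)}\le1$. The decisive step is the pointwise domination $g(s)\le w(s)$ for all $s$: since $g$ is non-decreasing, for a.e.\ $\tau>s$ one has $v_1(\tau)g(s)\le v_1(\tau)g(\tau)\le1$, and taking the essential supremum over $\tau>s$ gives $g(s)\le\|v_1\|_{L^\infty(s,\infty)}^{-1}=w(s)$. Monotonicity of the $L^\infty$-norm then yields $\overline{S}_u(g)(t)=\|ug\|_{L^\infty(t,\infty)}\le\|uw\|_{L^\infty(t,\infty)}=\overline{S}_u(w)(t)$ for every $t$; multiplying by $v_2(t)$ and passing to the essential supremum over $t$ gives $\|\overline{S}_u g\|_{L_{v_2}^\infty}\le B$, i.e.\ boundedness on the cone with norm at most $B$.

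For the necessity ($\Rightarrow B<\infty$), I would test the assumed bound against the one-parameter family $h_\sigma:=w(\sigma)\,\chi_{[\sigma,\infty)}$, $\sigma>0$. Each $h_\sigma$ is non-negative, non-decreasing and vanishes near the origin, hence $h_\sigma\in\mathbb{A}$, and $\|h_\sigma\|_{L_{v_1}^\infty}=w(\sigma)\|v_1\|_{L^\infty(\sigma,\infty)}=1$. Since $\overline{S}_u(h_\sigma)(t)=w(\sigma)\|u\|_{L^\infty(\sigma,\infty)}$ is constant in $t$ on $(0,\sigma)$, the boundedness hypothesis forces $\|v_2\|_{L^\infty(0,\sigma)}\,w(\sigma)\,\|u\|_{L^\infty(\sigma,\infty)}\le C$ for every $\sigma>0$, where $C$ is the operator norm. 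Using $u(s)\le\|u\|_{L^\infty(s,\infty)}$ (by continuity of $u$) and the monotonicity of $s\mapsto\|v_2\|_{L^\infty(0,s)}$, a rearrangement of these essential suprema turns this family of inequalities into $v_2(t)\,\overline{S}_u(w)(t)\le C$ for a.e.\ $t$, that is, $B\le C<\infty$; combined with the sufficiency this also identifies the sharp constant $C=B$.

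The routine ingredients are the two norm computations and the monotonicity of $\overline{S}_u$. The step requiring genuine care — and the main obstacle — is the bookkeeping of the three essential suprema (those defining $\overline{S}_u$, $\|\cdot\|_{L_{v_1}^\infty}$ and $\|\cdot\|_{L_{v_2}^\infty}$): the inequalities $v_1(s)\le\|v_1\|_{L^\infty(s,\infty)}$, $g(s)\le w(s)$ and $v_2(t)\le\|v_2\|_{L^\infty(0,t)}$ hold only almost everywhere and rely on the one-sided limits of the maps $s\mapsto\|v_1\|_{L^\infty(s,\infty)}$ and $s\mapsto\|v_2\|_{L^\infty(0,s)}$, so one must verify that the pointwise estimates upgrade to the essential-supremum statements without any loss in the constant. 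One must also check that the hypotheses $0<\|v_1\|_{L^\infty(t,\infty)}<\infty$ and $0<\|v_2\|_{L^\infty(0,t)}<\infty$ guarantee that $w$ and the test functions $h_\sigma$ are well defined and genuinely lie in $\mathbb{A}$.
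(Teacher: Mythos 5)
The paper itself does not prove this lemma: it is quoted as a known result, with the proof deferred to the literature (Theorem 5.4 in \cite{burenkov2010boundedness}, or Theorem 3.1 in \cite{guliyev2013boundedness}), so there is no in-paper argument to compare against. Your proof is a correct, self-contained derivation, and its two halves are sound. For sufficiency, the pointwise domination is exactly right: for $g\in\mathbb{A}$ with $\|g\|_{L_{v_1}^\infty(0,\infty)}\le 1$, monotonicity of $g$ gives $v_1(\tau)g(s)\le v_1(\tau)g(\tau)\le 1$ for a.e.\ $\tau>s$, hence $g(s)\,\|v_1\|_{L^\infty(s,\infty)}\le 1$, i.e.\ $g\le w$ everywhere, and monotonicity of $\overline{S}_u$ yields the bound with constant $B$. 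For necessity, the family $h_\sigma=w(\sigma)\chi_{[\sigma,\infty)}$ does lie in $\mathbb{A}$, satisfies $\|h_\sigma\|_{L_{v_1}^\infty(0,\infty)}=1$, and testing gives $\|v_2\|_{L^\infty(0,\sigma)}\,w(\sigma)\,\|u\|_{L^\infty(\sigma,\infty)}\le C$ for every $\sigma$; combining this (with $\sigma=s$) with $u(s)\le\|u\|_{L^\infty(s,\infty)}$ (valid at every $s$ by continuity) and with the a.e.\ inequality $v_2(t)\le\|v_2\|_{L^\infty(0,s)}$ for all $s>t$ (valid for a.e.\ $t$: take rational $s$ first, then use monotonicity of $s\mapsto\|v_2\|_{L^\infty(0,s)}$) gives $v_2(t)u(s)w(s)\le C$ for a.e.\ $t$ and all $s>t$, whence $B\le C$, and the two halves together identify the operator norm as $B$. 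The choice of the one-parameter family $h_\sigma$ is a good one, since $w$ itself need not belong to $\mathbb{A}$ (it need not vanish as $t\to 0^+$), so one cannot simply test with $w$.

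One justification you give is wrong, although harmlessly so: the limit identity $\lim_{\delta\to 0^+}\|v_1\|_{L^\infty(s-\delta,\infty)}=\|v_1\|_{L^\infty(s,\infty)}$ invoked for the structural fact $v_1(s)w(s)\le 1$ a.e.\ can fail, e.g.\ for $v_1=2\chi_{(0,1)}+\chi_{(1,\infty)}$ at $s=1$; it holds only off the (at most countable) jump set of the non-increasing function $s\mapsto\|v_1\|_{L^\infty(s,\infty)}$, which is still enough for an almost-everywhere statement, but the argument should be phrased that way. More importantly, this fact is never actually used: sufficiency rests only on $g\le w$, and necessity only on the norm computation for $h_\sigma$, so you could delete it (or correct its justification) without affecting the proof.
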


\begin{refproof}[Proof of \cref{thm: frac-max-op-main-2}]
Let $u(r)=r^{-Q/q_{i}}$, $v_{2}(r)= \psi(x,r)^{-1/m}$ and $v_{1}(r)= \varphi_{i}(x,r)^{-1} r^{-Q/p_{i}} ~(i=1,2,\ldots,m)$,  According to the   hypothesis   \labelcref{condition:supremal-type-integral-inequality}, it follows that
\begin{align*}
  \left\|v_{2} \overline{S}_{u} \Big(\|v_{1} \|_{L^{\infty}(\cdot,\infty)}^{-1} \Big) \right\|_{L^{\infty}(0,\infty)}
   &\le C.
\end{align*}

Set  $1< p_{i}<\infty~(i=1,2,\ldots,m)$ and  $\vec{f}=(f_{1},f_{2},\dots,f_{m}) \in \mathcal{L}^{p_{1},\varphi_{1}}(\mathbb{G})\times \cdots \times  \mathcal{L}^{p_{m},\varphi_{m}}(\mathbb{G})$.
%
  Using \cref{lem:frac-max-loc-lebesgue} and \cref{lem:supremal-type-inequality-thm3.1-guliyev2013boundedness}, we obtain
\begin{align*}
  \|\mathcal{M}_{\alpha,m}(\vec{f})\|_{\mathcal{L}^{q,\psi}(\mathbb{G})}  &= \sup_{x \in \mathbb{G} \atop r>0 } \dfrac{1}{\psi(x,r)} \Big( \frac{1}{\vert B(x,r)\vert } \dint_{B(x,r)} \vert \mathcal{M}_{\alpha,m}(\vec{f})(y)\vert ^{q} \mathd y \Big)^{1/q}   \\
   &\le C  \prod_{i=1}^{m} \sup_{x \in \mathbb{G} \atop r>0 } \psi(x,r)^{-1/m} \sup_{t>2r} t^{\alpha_{i}-\frac{Q}{p_{i}}} \|f_{i}\|_{L^{p_{i}}(B(x,t))}   \\
   &\le C   \prod_{i=1}^{m} \sup_{x \in \mathbb{G} \atop r>0 } \varphi_{i}(x,r)^{-1} r^{-Q/p_{i}}  \|f_{i}\|_{L^{p_{i}}(B(x,r))}  \\
  &\le C   \prod_{i=1}^{m} \|f_{i}\|_{\mathcal{L}^{p_{i},\varphi_{i}}(\mathbb{G})}.
\end{align*}

 This completes the proof of \cref{thm: frac-max-op-main-2}.

\end{refproof}


 \subsubsection*{Funding information:}
 This work was partly supported by Project of Heilongjiang Province Science and Technology Program (No.2019-KYYWF-0909,1355ZD010), the National Natural Science Foundation of China (Grant No.11571160)  and the  Reform and Development Foundation for Local Colleges and Universities of the Central Government(No.2020YQ07).

 \subsubsection*{Conflict of interest: }
The authors state that there is no conflict of interest.   

 \subsubsection*{Data availability statement:}
  All data generated or analysed during this study are included in this manuscript.
 \subsubsection*{Author contributions:}
 All authors contributed equally to the writing of this article.
 All authors read the final manuscript and approved its submission.

%
%
%


\phantomsection
\addcontentsline{toc}{section}{References}
\bibliographystyle{tugboat}          
\bibliography{wu-reference}

\end{document}